\documentclass[11pt,leqno]{amsart}
\usepackage{amsmath}

\usepackage{amsfonts}
\usepackage{mathtools}
\usepackage{amssymb}
\usepackage{mathrsfs}
\usepackage{subcaption}  
\pagestyle{plain}

\usepackage{bookmark} 


\usepackage[a4paper, centering]{geometry}
\usepackage{epstopdf}
\usepackage{amscd}
\usepackage[dvipsnames]{xcolor}
\usepackage{graphicx}
\usepackage{yfonts}


\newtheorem{teo}{Theorem}[section]
\newtheorem{lem}[teo]{Lemma}
\newtheorem{cor}[teo]{Corollary}
\newtheorem{prop}[teo]{Proposition}

\newcommand{\medint}{-\kern  -,395cm\int}

\theoremstyle{remark}
\newtheorem{oss}[teo]{Remark}

\theoremstyle{definition}
\newtheorem{defi}[teo]{Definition}

\theoremstyle{definition}

\newcommand{\tildew}{\tilde w}
\newcommand{\wstar}{{\hat w}}

\newcommand{\supp}{\mathrm{supp}}

\newcommand{\bbR}{{\mathbb{R}}}
\newcommand{\bbN}{{\mathbb{N}}}

\usepackage{mathtools}
\usepackage{esint}

\newcommand{\R}{\bbR}



\newcommand{\ov}[1]{\overline{#1}}


\def\de{{\rm d}}

\newcommand{\norma}[1]{{\left\| #1\right\|}}

\newcommand{\hol}[1]{\"{#1}}


\pagestyle{plain}

\makeatletter
\def\cleardoublepage{\clearpage\if@twoside \ifodd\c@page\else
\hbox{}
\thispagestyle{empty}
\newpage
\if@twocolumn\hbox{}\newpage\fi\fi\fi}
\makeatother
\title{Minimization of Degenerate Nonlinear Functionals under Radial Symmetry}

\author[V.~Chiad\`o Piat]{Valeria Chiad\`o Piat}
\address{Dipartimento di scienze Matematiche Giuseppe Lagrange, Corso duca degli abruzzi, 24, Torino (Italy)}
\email{valeria.chiadopiat@polito.it}\author[V.~De Cicco]{Virginia De Cicco}
\address{Dipartimento di Scienze di Base  e Applicate per l'Ingegneria, Sapienza Univ.\ di Roma\\
	Via A.\ Scarpa 10 -- I-00185 Roma (Italy)}
\email{virginia.decicco@uniroma1.it}
\author[A.~Melchor Hernandez]{Anderson Melchor Hernandez}
\address{Dipartimento di Matematica, Via Zamboni, 33, 40126, Bologna (Italy)}
\email{anderson.melchor@unibo.it}

%

\keywords{Lower semicontinuity, relaxation, minimization, degenerate variational integrals, weight, Poincar\'e inequality}
\subjclass[2020]{26A15,49J45}


\begin{document}
\begin{abstract}
In this work, we study the minimization of nonlinear functionals in dimension $d\geq 1$ that depend on a degenerate radial weight $w$. Our goal is to prove the existence of minimizers in a suitable functional class here introduced and to establish that the minimizers of such functionals, which exhibit $p$-growth with $1 < p < +\infty$, are radially symmetric. In our analysis, we adopt the approach developed in \cite{CCMH1,CC}, where $w$  does not satisfy classical assumptions such as doubling or Muckenhoupt conditions.  The core of our method relies on proving the validity of a weighted Poincaré inequality involving a suitably constructed auxiliary weight.
\end{abstract}
\maketitle
\tableofcontents
\section{Introduction}
In this work, we consider the analysis of nonlinear functionals in dimension $d\geq 1$, allowing for a degenerate weight $w$.  Our main result concerns the minimization of a functional (see formula \eqref{scopus} below) involving the lower semicontinuous envelope of $F$, denoted as $\overline{F}$,  where $F$ satisfies $p$-growth for $1<p<+\infty$. More precisely, we consider
\begin{align*}
F(u)\coloneqq\left\{
\begin{aligned}
& \int_{\Omega} |\nabla u|^{p}w \de x\text{ \ \ if } u\in AC_{r}^{d}(\Omega),\\
& +\infty \ \ \ \ \ \ \ \ \ \ \ \ \ \ \text{if}\hskip 0,1cm  u\in X\setminus AC_{r}^{d}(\Omega),
\end{aligned}
\right.
\end{align*}
where $\Omega$ is an open bounded set in $\mathbb{R}^{d}$ and rotational invariant,  $w$ is a nonnegative, locally integrable radial function, and $X$ is a topological space comprising measurable functions which will be introduced later on, and $AC_{r}^{d}(\Omega)$ is the space of radial functions (introduced in \eqref{spazior} in Sect. 2) contained in the class $AC^{d}(\Omega)$ of the $d$-absolutely continuous functions in the sense of Mal{\'y}, see \cite[Page 2]{Mal}. 

Before giving all the precise details of our main result, let us give some review about the study of functionals like $F$ and $\overline{F}$.
Several works have investigated the functional above by adopting different functional frameworks; see, for instance, \cite{CD,CCMH1,CCMH2,CC,FM,Ha,Ma}. Particular attention has been devoted to the case $p=2$, which is considered canonical due to its connection with probabilistic issues involving the so-called Dirichlet forms \cite{ALB,RO}. Further recent works have been dedicated in the analysis of a weaker form of $F$ where the gradient is replaced by the upper gradient \cite{LAMB2,LAMB3,LAMB1}. These works have been given a new field of research where the authors have aimed to extending the theory of regularization by heat kernel to metric measure spaces. In doing that, some important difficulties taking place, for instance, what is the natural definition of Sobolev space when $\Omega$ is a metric space of arbitrary dimension supporting a generic measure $\mu$, say,  $\mu(\de x)=w(x)\de x$, see \cite{HKST}. The theory of weighted Sobolev spaces in infinite-dimensional spaces is known for instance when the measure $\mu$ is the gaussian measure which allows even to define space of bounded variation, see for instance \cite{Caselles}. 
Recently some works have been dedicated to establish a theory of weighted Sobolev spaces where the reference measure  do not satisfy any doubling or Poincaré inequality \cite{LAMB0}. As a matter of fact, they proposed the definition of weighted Sobolev spaces, defined over a complete and separable metric space equipped with a boundedly-finite Borel measure using  three different approaches: via approximation with Lipschitz functions; by studying the behaviour along curves; via integration-by-parts, using Lipschitz derivations with divergence. Furthermore, they proved (see \cite[Theorem 7.1]{LAMB0}) the equivalence of all definitions. Let us mention that the case of approximation with Lipschitz functions is related to the relaxation of $F$.
In general, the identification of the functional ${\overline F}$ is a challenging task, and some authors have been used the density of $C^{1}$-functions in weighted Sobolev spaces as an important tool, see for instance, \cite{CPSC,CC}.  In  such an approach, however, some additional assumptions on $w$, as  described in \cite{CPSC}, were necessary. For example, to prove the density of $C^{1}$-functions, it is sometimes assumed that $w$ satisfies the doubling or Muckenhoupt conditions \cite{HKST,Muk}. Alternatively, in \cite{CC}, have been adopted the case where such requirements on $w$ are not satisfied, $p=2$, and where $X$ is not fixed a priori.

Recently, in \cite{CCMH1} an explicit formula for $\ov{F}$  have been obtained in the onedimensional case with $1<p<+\infty$. We emphasize that, a priori, the choice of $X$ strongly depends on the weight $w$. In fact, as observed in \cite{CCMH1}, $X$ can be defined with respect to an additional weight, denoted by $\wstar_{p}$. As noted by the authors, this function plays a crucial role in compensating for the degeneracy of $w$ and allows for a proper characterization of the domain of $\overline{F}$.
At this stage, let us highlight a key difference between our approach to relaxing $F$ and the one developed in \cite{LAMB0}. In our relaxation procedure, we establish a Poincaré inequality that allows us to identify the appropriate topology in which to relax $F$. Moreover, our density argument involves two distinct measures, in contrast to \cite[Definition 5.2]{LAMB0}. Let us also notice that the approach via density proposed in \cite[Subsection 5.1]{LAMB0} comes from \cite{LAMB4}. However, let us mention that in \cite{LAMB4} the authors were involved with metric spaces where a doubling condition is assumed. On the other hand, we also observe that, since we work in a finite-dimensional setting, there is no need to define Sobolev spaces via relaxation, as is done in that work. 

In the present work, we then aim to analyze the minimizer of 
\begin{equation}\label{scopus}
H(u):=\overline{F}(u)+\|u-g\|_{L^{p}(\Omega,(\wstar_{p})^{p-1})},
\end{equation}
\vskip -0,1cm
where $g\in L^{p}(\Omega,(\wstar_{p})^{p-1})$ is a given radial function. To this end, we address the multidimensional extension of the results from \cite{CCMH1} about the relaxation of $F$. In the one-dimensional setting considered in that work, the embedding of Sobolev functions into absolutely continuous functions played a crucial role in deriving an explicit formula for $\overline{F}$. However, it is worth noting that this property does not hold in higher dimensions, making the extension of those results to the multivariate case far from straightforward. In particular, let us recall that the notion of absolute continuity in higher dimensions differs significantly from the one-dimensional case \cite{Mal}. \\
We now outline our approach to study the minimizer of  $H$. In our setting, we restrict ourselves to the case of a radial weight $w$, which can be written as $w(\cdot) = \eta(|\cdot|)$, where $\eta$ is a measurable function of a single variable. Furthermore, we construct an additional radial weight, denoted by $\wstar_{p}$, which will be used to define the space $X$, and the forcing term in \eqref{scopus} involving a generic function $g\in L^{p}(\Omega,(\wstar_{p})^{p-1})$. This particular structure of $w$ enables us to introduce the space $AC_{r}^{d}$, consisting of radial functions that are absolutely continuous on $\supp(\eta)$. This space of functions is new, and it is a subset of the space of $d$-absolutely continuous functions of $d$-bounded variation defined in \cite{Mal}, see Lemma \autoref{bvdspace} below. By following \cite{CCMH1}, we then need to prove Poincar\'e inequalities involving $w$ and $\hat{w}_{p}$. Specifically, we consider the $p$-norm of the gradient term of a generic function $u$ weighted by $w$, while the $p$-norm of $u$ itself is weighted by $(\hat{w}_{p})^{p-1}$. Subsequently, assuming that $w$ is finitely degenerate (see Definition \ref{deff1} below), we proceed to choose $X=L^{p}((\hat{w}_{p})^{p-1})$, and we show that $AC_{r}^{d}$-functions are dense, in a suitable Sobolev space $W\subseteq X$ (see formula \eqref{eq:spaceW} below). Therefore, we obtain an explicit identification of the domain of $\overline{F}$ performing the relaxation in the strong topology of $X$. The resulting relaxed functional $\overline{F}$ still maintains the same form of $F$ as in the onedimensional case considered in \cite{CCMH1}, and its ambient space consists of functions that are of $L^{p}((\hat{w}_{p})^{p-1})$-integrable type. 

Lastly, after providing the explicit expression for $\overline{F}$ we then aim to prove that the minimizer of $H$ is a radial function among all possible functions in $W_{{\rm loc}}^{1,1}(\Omega)$ whose gradient belongs to $L_{{\rm loc}}^{p}(\Omega)$, see Theorem \autoref{thm:minimizer}.  The fact that $\overline{F}$ is given explicitly allows us to proceed in proving our minimization result by employing the lines of \cite{CrastaMalusa}.

This work is structured as follows. In Section \ref{sec:3}, we study the validity of Poincar\'e inequalities with double weight, see Theorem \ref{Poin1} below. In Section \ref{relaxp}, we formulate and prove a relaxation theorem, see Theorem \ref{mainfinitelocsum} below. In Section \ref{sec:minimizer}, we prove firstly that there exists a radial minimizer in the class of radial competitors and then in the class of general functions, and eventually that the minimizer of $H$ is radial.

\section{Setting and preliminaries}\label{sec:2}
\subsection{A radial weight}
Let $d\geq 1$ be a natural number, and consider $\Omega$ to be an open bounded subset of $\bbR^{d}$ which is invariant by rotation, i.e.
$$
\Omega=\{x\in \R^d: |x|< b
\}\,\ \ \text{ or }\ \ \ 
\Omega=\{x\in \R^d: 0\leq a< |x|< b
\},
$$
with $a,b\in \R$.
Given $1<p<\infty$, we let $\frac{1}{p'}= 1-\frac{1}{p}$. In what follows, we consider a radial weight $w:\R^{d} \to \R$ satisfying
\begin{equation}\label{minassweight}
w(x)\coloneqq \eta(\vert x\vert), \hskip 0,1cm \eta\geq 0 \text{ a.e. $\eta \in L^1_{\rm{loc}}([0,+\infty[)$ with compact support.}
\end{equation}
Next, we denote by ${\mathrm supp}(\eta)$ the support of $\eta$. It is not restrictive to assume that ${\mathrm supp}(\eta)\subseteq(a,b)$ is a bounded open interval with $0\leq a<b$, and we consistently interchange ${\mathrm supp}(\eta)$ and $(a,b)$ throughout the text. We denote by $I_{p,{\mathrm supp}(\eta)}$ the set
\begin{align}
\label{decomposet}
I_{p,{\mathrm supp}(\eta)}:=\Big\{r\in({\mathrm supp}(\eta))^{\circ}:\,&\exists\,\epsilon >0\text{ such that }
\eta^{-\frac{1}{p-1}}\in L^1\left((r-\epsilon ,r+\epsilon )\right)\Big\},
\end{align}
where $({\mathrm supp}(\eta))^{\circ}$ denotes the interior of ${\mathrm supp}(\eta)$. The set $I_{p,{\mathrm supp}(\eta)}$ is the largest open set in $(a,b)$ such that $\eta^{-\frac{1}{p-1}}$ is locally summable. Without loss of generality, we can express $I_{p,{\mathrm supp}(\eta)}$ as the disjoint union
\begin{equation}\label{minassweight3}
I_{p,{\mathrm supp}(\eta)} = \bigcup_{i=1}^{{N_{p,\eta}}} (a_{p,i}, b_{p,i}),
\end{equation}
with $1\leq N_{p,\eta}\leq +\infty$. Subsequently, for the sake of a lean notation, we set $a_{i} \coloneqq a_{p,i}$, $b_{i} \coloneqq b_{p,i}$, $N_{\eta} \coloneqq N_{p,\eta}$. Let us now provide the following definition.
\begin{defi}\label{deff1}
\begin{itemize}
\item[(i)] If $I_{{\mathrm supp}(\eta),\eta}=\,\emptyset$, we put $N_\eta:=0$.
\item[(ii)]  If $1\le\,N_{\eta}<\,+\infty$  we say that $\eta$ is {\it finitely degenerate} in ${\mathrm supp}(\eta)$. 
\item[(iii)]   If $N_\eta=\,\infty$  we say that $\eta$ is {\it not finitely degenerate} in ${\mathrm supp}(\eta)$.
\end{itemize}
\end{defi}
Furthermore, we define the set
\begin{align}\label{Iomega}
I_{\Omega,w}\coloneqq \{x\in \Omega: \vert x\vert \in I_{p,{\mathrm supp}(\eta)}\},
\end{align}
and also
\begin{align}
\label{newset}
I_{\alpha_{i},\beta_{i}}\coloneqq \{x\in I_{\Omega,w}: \alpha_{i}<\vert x\vert< \beta_{i}\}, \hskip 0,2cm\, a_{i}<\alpha_{i}<\beta_{i}\leq b_{i}, \hskip 0,2cm i=1,\ldots, N_{\eta}.
\end{align}
\subsection{The class of the absolutely continuous functions in several variable}
By following \cite{Mal}, given a function $u:\Omega\rightarrow \mathbb{R}$, let us define the $d$-variation of $u$ on an open set $G\subset \Omega$ as
\begin{align}
V_{d}(u,G)\coloneqq \sup\left\{\sum_{i}\left({\rm Osc}_{B_{i}}u\right)^{d}: \text{$\{B_{i}\}$ is finite family of disjoint balls in $G$}\right\},
\end{align}
where ${\rm Osc}_{B_{i}}u$ denotes the oscillation of $u$ on $B_{i}$, and is defined as
\begin{align*}
{\rm Osc}_{B_{i}}u\coloneqq \sup_{x,y\in B_{i}}\vert u(x)-u(y) \vert.
\end{align*}
We say that $u$ has a $d$-bounded variation in $\Omega$ if $V_{d}(u,\Omega)<+\infty$. We denote by $BV^{d}(\Omega)$ the class of all functions with $d$-bounded variation in $\Omega$. Furthermore, we denote by $AC^{d}(\Omega)$ the space of all $d$-absolutely continuous functions in $BV^{d}(\Omega)$. Recall that a function $u:\Omega\rightarrow \mathbb{R}$ is $d$-absolutely continuous (see \cite[Page 2]{Mal}) if for each $\varepsilon>0$ there is $\delta=\delta(\varepsilon)>0$ such that for each disjoint finite family ${B_{i}}$ of closed balls in $\Omega$ we have
\begin{align*}
\sum_{i}\mathcal{L}^{d}(B_{i})\leq \delta \Rightarrow \sum_{i}\left({\displaystyle{\rm Osc}}_{B_{i}}u\right)^{d}<\varepsilon.
\end{align*} 

As proven in \cite[Theorem 3.3]{Mal} every function $u$ with $d$-bounded variation (and so every $d$-absolutely continuous function) is differentiable a.e. and its gradient $\nabla u\in L^d(\Omega;\R^d)$. Hence
\begin{equation}\label{sobolev}
AC^{d}(\Omega)\cap L^d(\Omega;\R^d)\subset W^{1,d}(\Omega).
\end{equation}
Moreover, if $p>d$, then
\begin{equation}\label{ifp>d}
W^{1,p}(\Omega)\subset AC^{d}(\Omega),
\end{equation}
(see \cite[Theorem 4.1]{Mal}).
Let $U\subset \Omega$ be an open bounded subset such that $I_{\Omega,w}\subset U$. We denote by $AC_{{\rm r}}^{d}(U)$ the following set of functions:
\begin{align}
\label{spazior}
\begin{aligned}
AC_{{\rm r}}^{d}(U)\coloneqq &\{u: U\rightarrow \mathbb{R}\, \text{measurable}: u\in W^{1,1}_{\rm{loc}}(I_{\Omega,w}),\\
&\phantom{formulaformula}\text{$u(x)=v(\vert x\vert)$ in $I_{\Omega,w}$ for some $v\in AC({\supp(\eta)})$}\}.
\end{aligned}
\end{align} 
\begin{lem}\label{bvdspace}
Let $AC_{r}^{d}(I_{\Omega,w})$ be defined as in \eqref{spazior}. Then
\begin{align}
AC_{r}^{d}(I_{\Omega,w})\subset AC^{d}(I_{\Omega,w}).
\end{align}
Furthermore, if $I_{\Omega,w}=\Omega$, then 
\begin{align}
AC_{r}^{d}(\Omega)\subset AC^{d}(\Omega).
\end{align}
\end{lem}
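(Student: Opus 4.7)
The strategy is to verify Mal\'y's $\epsilon$--$\delta$ condition for $d$-absolute continuity on $I_{\Omega,w}$; since this condition on a bounded open set automatically forces finite $d$-variation (split any given finite disjoint family into subfamilies of total volume at most $\delta$, and sum), it already places $u$ inside $AC^d(I_{\Omega,w})$. Hence, given $\epsilon>0$, I would produce $\delta>0$ such that $\sum_i (\mathrm{Osc}_{B_i} u)^d<\epsilon$ whenever $\{B_i\}\subset I_{\Omega,w}$ is a finite family of pairwise disjoint closed balls with $\sum_i \Leb{d}(B_i)\leq\delta$.

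The first step is the radial-to-one-dimensional reduction. Writing $u(x)=v(|x|)$ with $v\in AC(\supp(\eta))$, for every ball $B_i=B(x_i,r_i)\subset I_{\Omega,w}$ the radial image $J_i:=\{|x|:x\in B_i\}\subset\supp(\eta)$ is an interval of length at most $2r_i$, and by radiality $\mathrm{Osc}_{B_i} u=\mathrm{Osc}_{J_i} v$. Since $\Leb{d}(B_i)=\omega_d r_i^d$ (with $\omega_d$ the volume of the unit ball), the volume constraint yields $r_i\leq\rho:=(\delta/\omega_d)^{1/d}$ for every $i$ as well as $\sum_i r_i^d\leq\delta/\omega_d$.

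Using the two consequences of $v\in AC(\supp(\eta))$---uniform continuity giving $\mathrm{Osc}_{J_i} v\leq\omega_v(2r_i)\leq\omega_v(2\rho)$, where $\omega_v$ is the modulus of continuity of $v$ on the compact set $\supp(\eta)$, and absolute continuity giving $\mathrm{Osc}_{J_i} v\leq\int_{J_i}|v'(s)|\,ds$---I would insert the first bound in $d-1$ factors and the second in one, obtaining
\begin{equation*}
\sum_i (\mathrm{Osc}_{B_i} u)^d \;\leq\; \omega_v(2\rho)^{d-1}\int_{\supp(\eta)} m(s)\,|v'(s)|\,ds, \qquad m(s):=\#\{i:s\in J_i\},
\end{equation*}
where the prefactor $\omega_v(2\rho)^{d-1}$ tends to $0$ as $\delta\to 0$ by continuity of $v$.

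The main obstacle is the control of the radial multiplicity $m(s)$: disjointness of $\{B_i\}$ in $\mathbb{R}^d$ does not imply disjointness of the projections $J_i$, because many disjoint balls may lie on the same sphere at different angular positions. I would overcome this via a spherical-cap volume argument: at each fixed radius $s$, the sets $B_i\cap\{|x|=s\}$ are pairwise disjoint $(d-1)$-dimensional spherical caps whose total $(d-1)$-measure is bounded by that of the sphere itself, so that $\sum_{i:s\in J_i} r_i^{d-1}\leq C_{d,b}$. Combined with the volume constraint $\sum_i r_i^d\leq\delta/\omega_d$ and a H\"older-type rearrangement, the displayed integral is then controlled by a constant multiple of $\|v'\|_{L^1(\supp(\eta))}$, and the whole right-hand side vanishes as $\delta\to 0$. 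The second assertion, corresponding to $I_{\Omega,w}=\Omega$, is the same argument applied on all of $\Omega$.
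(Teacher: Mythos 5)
Your reduction to the one--dimensional profile is sound and is also the paper's starting point: $\mathrm{Osc}_{B_i}u=\mathrm{Osc}_{J_i}v$ with $J_i\subset\supp(\eta)$ an interval of length at most $2r_i$ (the paper phrases this through the essential variation of $v$ on $[|x|,|y|]$). The gap is exactly where you locate the ``main obstacle'', and the proposed cure does not work. First, the spherical--cap count does not give $\sum_{i:\,s\in J_i}r_i^{d-1}\le C_{d,b}$: the $(d-1)$--measure of $B_i\cap\{|x|=s\}$ is comparable to $r_i^{d-1}$ only when $s$ lies well inside $J_i$, and it degenerates to $0$ as $s$ approaches an endpoint of $J_i$, so disjointness of the caps yields no lower bound of the form $c\,r_i^{d-1}$ per ball. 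Second, even granting that bound, your displayed inequality requires an $L^\infty$ bound on the counting function $m(s)=\#\{i:\,s\in J_i\}$ (since $v'$ is only in $L^1$, no H\"older rearrangement can absorb an unbounded $m$), and $m$ is genuinely unbounded: one can place on the order of $(s/r)^{d-1}$ pairwise disjoint balls of radius $r$ all meeting the sphere $\{|x|=s\}$.

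This is not a repairable technicality, because the estimate you are aiming for fails under the stated hypotheses. Take $d=2$, a radius $s$ interior to $I_{p,\supp(\eta)}$, and $v(t)=\sqrt{(t-s)_+}$, which is absolutely continuous with $v'\in L^1$; then $u(x)=v(|x|)$ lies in $W^{1,1}_{\rm loc}$ and belongs to $AC_r^{2}$ in the sense of \eqref{spazior}. For small $r>0$ choose $N\asymp s/r$ pairwise disjoint closed balls of radius $r/2$ centred on the circle $|x|=s+r$: their total area is $O(sr)\to 0$, while each oscillation is at least $c\sqrt{r}$, so $\sum_i(\mathrm{Osc}_{B_i}u)^2\ge c'Nr\ge c''s>0$ for every $r$. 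Hence no $\delta$ can work, and such a $u$ is not $2$--absolutely continuous. Any correct argument must therefore exploit more than $v\in AC(\supp(\eta))$ --- e.g.\ a Lipschitz bound on $v$, or integrability of $|v'|^{d}$ against $r^{d-1}\,\de r$, in the spirit of Mal\'y's embedding $W^{1,p}\subset AC^{d}$ for $p>d$. I note that the paper's own proof stumbles on the same obstruction: it bounds each $\mathrm{Osc}_{B_i}u$ by $\varepsilon/N^{1/d}$ using an absolute--continuity threshold $\delta_1$ that depends on $N$, the (arbitrary) number of balls, whereas $\delta$ must be fixed before $N$ is known.
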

\begin{proof}
Let $u\in AC_{r}^{d}(I_{\Omega,w})$. Then by definition $u$ is radial and so there is $v\in AC({\supp(\eta)})$ such that $u(\cdot)=v(\vert \cdot\vert)$.   Furthermore, $v\in {\rm BV}({\supp(\eta)})$ the space of functions of bounded variation. Let us take $\varepsilon>0$, notice that
\begin{align}\label{mkmk}
{\rm Osc}_{B_{i}}u=\sup_{x,y\in B_{i}}\vert u(x)-u(y) \vert&=\sup_{x,y\in B_{i}}\vert v(\vert x\vert)-v(\vert y\vert) \vert
\leq \sup_{x,y\in B_{i}}{\rm ess} V_{\vert x\vert}^{\vert y\vert}(v),
\end{align}
where 
\begin{align*}
{\rm ess} V_{\vert x\vert}^{\vert y\vert}(v)\coloneqq \sup\left\{\sum_{j=1}^{m}\vert v(t_{j+1})-v(t_{j})\vert\right\}
\end{align*}
and the supremum taken over all finite partitions $\{|x|<t_{1}<\cdots <t_{m+1}< |y|\}$ such that $t_{j}$ is a point of continuity of $v$. Now consider $R_{i}$ be the radius of $B_{i}$.  Notice that 
\begin{align*}
\sum_{i=1}^{N}\mathcal{L}^{d}(B_{i})=\sum_{i=1}^{N}\frac{\pi^{\frac{d}{2}}}{\Gamma\left(\frac{d}{2}+1\right)}R_{i}^d \leq \delta,
\end{align*}
so that 
\begin{align*}
NC\max _{i=1,\ldots, N}R_{i}^{d} \leq \delta, \hskip 0,2cm C\coloneqq \frac{\pi^{\frac{d}{2}}}{\Gamma\left(\frac{d}{2}+1\right)},
\end{align*}
where $N$ denotes the number of balls.  Hence, we have that for each $i=1,\ldots,N$ that

\begin{align*}
R_{i}\leq \left(\frac{\delta}{NC}\right)^{\frac{1}{d}}.
\end{align*}
For simplicity, let us choose $\delta\coloneqq \frac{\varepsilon^{d}}{2^{d}C}$. Observe that 
\begin{align*}
\sum_{j=1}^{m}\vert t_{j+1}-t_{j}\vert \leq \vert \vert x\vert-\vert y \vert \vert\leq \vert x-y\vert \leq 2R_{i}\leq \frac{\varepsilon}{{N^{\frac{1}{d}}}}.
\end{align*}
The absolutely continuity property of $v$ implies that there exists $\delta_1>0$ such that
\begin{equation}\label{nmnmnm}
\sum_{j=1}^{m}\vert t_{j+1}-t_{j}\vert \leq \delta_1\  \Rightarrow \ 
\sum_{j=1}^{m}\vert v(t_{j+1})-v(t_{j})\vert<  \frac{\varepsilon}{{N^{\frac{1}{d}}}}.
\end{equation}
Now, if we choose $\varepsilon$ sufficiently small, we get
$$
\frac{\varepsilon}{(NC)^{\frac{1}{d}}}\leq \delta_1.
$$
Hence by \eqref{nmnmnm} for all finite partitions $\{|x|<t_{1}<\cdots <t_{m+1}< |y|\}$ such that $t_{i}$ is a point of continuity of $v$ it holds
$$
\sum_{j=1}^{m}\vert v(t_{j+1})-v(t_{j})\vert<  \frac{\varepsilon}{{N^{\frac{1}{d}}}}
$$
and, taking the supremum over these partitions, we obtain
$$
{\rm ess} V_{\vert x\vert}^{\vert y\vert}(v)\leq  \frac{\varepsilon}{{N^{\frac{1}{d}}}}.
$$
Thus by \eqref{mkmk}
\begin{align*}
{\rm Osc}_{B_{i}}u=\sup_{x,y\in B_{i}}\vert u(x)-u(y) \vert & \leq \frac{\varepsilon}{{N^{\frac{1}{d}}}}
\end{align*}
and we obtain that for every $i:1,\dots,N$
\begin{align*}
\sum_{i=1}^N\left({\displaystyle{\rm Osc}}_{B_{i}}u\right)^{d}\leq N \frac{\varepsilon^{d}}{N}\leq \varepsilon.
\end{align*}
Since $\varepsilon$ is arbitrary, we are done. 
\end{proof}
Let us consider
${\mathcal{W}}_{r}^{1,q}(\Omega)$ the Sobolev-type space of radial functions with $q\in [1,\infty]$, defined as
\begin{align*}
{\mathcal{W}}^{1,q}_r(\Omega):=\left\{u\in AC_{r}^{d}(\Omega): \nabla u\in L^{q}(\Omega)\right\}.
\end{align*} 
By \eqref{ifp>d}
if $p>d$, then
\begin{equation*}\label{ifp>d1}
W^{1,p}_r(\Omega)\subset AC_r^{d}(\Omega),
\end{equation*}
where
\begin{equation*}\label{W1p}
W^{1,p}_r(\Omega):=\{u\in W^{1,p}(\Omega): u {\text{ radial}}
\}.
\end{equation*}
Therefore
\begin{equation*}\label{ifp>d12}
W^{1,p}_r(\Omega)\subset {\mathcal{W}}^{1,p}_r(\Omega).
\end{equation*}
On the other hand, by \eqref{sobolev} for $p=d$
\begin{equation*}\label{sobolev1}
AC^{d}_r(\Omega)\cap L^d(\Omega)\subset W^{1,d}_r(\Omega),
\end{equation*}
then
$$
AC^{d}_r(\Omega)\subset L^d_{\rm{loc}}(\Omega),
$$
and we conclude that
\begin{equation*}\label{ifp>d123}
{\mathcal{W}}^{1,d}_r(\Omega)\subset W^{1,d}_{r,{\rm loc}}(\Omega).
\end{equation*}

\subsection{The degenerate functional}
Let us now define
\begin{equation*}
F(u)=\left\{
\begin{aligned}
& \int_{\Omega} |\nabla u|^p\,w\,\de x\text{   \ \ if } u\in AC_{r}^{d}(\Omega)\\
& +\infty \ \ \ \ \ \ \ \ \ \ \ \text{   \ \ if } u\in X\setminus AC_{r}^{d}(\Omega).
\end{aligned}
\right.
\end{equation*}
Here, $X$ is an appropriate set of integrable functions, that will be chosen in Section \ref{relaxp}. Further, let $\overline F:\,X\to [0,+\infty]$ denote the lower semicontinuous envelope of $F$ w.r.t. the topology of  $X$. As we will see later on, our objective is to characterize the relaxation of the functional  $F$ concerning $L^{p}(\Omega,(\hat{w}_{p})^{p-1})$-convergence, where $\hat{w}_{p}$ is is defined below (see \eqref{pesop}). Let us now set 
\begin{align*}
\label{unidim}
{\rm{Dom}}_{\eta}\coloneqq \left\{v: {\supp(\eta)}\rightarrow \R: v\in W_{{\rm loc}}^{1,1}(I_{p,\supp(\eta)}), \displaystyle\int_{I_{p,\supp(\eta)}}\hskip -1cmr^{d-1}\vert v'(r)\vert^{p}\eta(r)\de r<+\infty\right\}.
\end{align*}
We now introduce the set
\begin{equation}\label{domw}
{\rm{Dom}}_{{\mathrm r},w}:=\Big\{u:\Omega\to\R: \text{measurable and $u(x)=v(\vert x\vert)$ for some $v\in {\rm{Dom}}_{\eta}$}\Big\}\,.
\end{equation}
We point out that
\begin{equation*}\label{domw1}
{\rm{Dom}}_{{\mathrm r},w}=\Big\{u:\Omega\to\R: \text{measurable}, u\in W^{1,1}_{\rm{loc}}(I_{\Omega,w}), \int_{I_{\Omega,w}}|\nabla u|^p w\,\de x<+\infty
\Big\}\,.
\end{equation*}
\begin{lem}[Fundamental convergence]\label {lemma2} 
Suppose that $\eta$ is finitely degenerate. Let $(u_h)_h\subset W^{1,1}(\Omega)$ be a sequence of radial functions such that
\begin{enumerate}
\item[(a)] $\displaystyle\sup_{h\in\bbN}\int_{I_{\Omega,w}}|\nabla u_h|^{p}w\,\de x<+\infty$\,,
\item[(b)] for every $i=\,1,\dots,N_\eta$ there exists $c_i$ such that $a_i< c_i<b_i$ and there exist finite the following limits
$$
\lim_{h\to+\infty}u_h(x)=d_i\in \bbR\, \hskip 0,1cm \text{for all $x\in I_{\Omega,w}$ such that $\vert x\vert=c_{i}$}.
$$
\end{enumerate}
Then there exists a subsequence $(u_{h_k})$ and a radial function 
$u:\,I_{\Omega,w}\to\bbR$ such that 
\begin{enumerate}
\item[(i)] $\displaystyle\lim_{k\to+\infty}u_{h_k}(x) =u(x) $ for every $x\in I_{\Omega,w}$\,,
\item[(ii)] $u\in {\rm{Dom}}_{{\mathrm r},w}$,
\item[(iii)]
\begin{align*}
\displaystyle\int_{I_{\Omega,w}}\vert \nabla u \vert^{p}w\de x\leq \liminf_{h_{k}\rightarrow +\infty}\displaystyle\int_{I_{\Omega,w}}\vert \nabla u_{h_{k}}\vert^{p}w\de x.
\end{align*}
\end{enumerate}
\end{lem}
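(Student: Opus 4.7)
The plan is to reduce the convergence problem to a one-dimensional statement on each component of $I_{p,\supp(\eta)}$ and then apply Ascoli--Arzelà together with weak compactness in a weighted $L^{p}$ space. Writing $u_{h}(x)=v_{h}(|x|)$ and passing to polar coordinates, hypothesis (a) translates into a uniform bound
\[
\sup_{h}\sum_{i=1}^{N_{\eta}}\int_{a_{i}}^{b_{i}}|v_{h}'(r)|^{p}\, r^{d-1}\eta(r)\,\de r<+\infty,
\]
and (b) reads simply $v_{h}(c_{i})\to d_{i}$ for $i=1,\dots,N_{\eta}$. On a compact subinterval $[\alpha,\beta]\subset(a_{i},b_{i})$ containing the pivot $c_{i}$, the factor $r^{d-1}$ is bounded above and away from zero, and by the very definition of $I_{p,\supp(\eta)}$ the function $\eta^{-1/(p-1)}$ is integrable. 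Hölder's inequality then yields, for $s,r\in[\alpha,\beta]$,
\[
|v_{h}(r)-v_{h}(s)|\le\left(\int_{s}^{r}|v_{h}'|^{p}\,t^{d-1}\eta\,\de t\right)^{1/p}\left(\int_{s}^{r}(t^{d-1}\eta)^{-1/(p-1)}\,\de t\right)^{1/p'};
\]
the first factor is uniformly bounded by (a), the second tends to $0$ as $|r-s|\to 0$, so $\{v_{h}\}$ is equicontinuous on $[\alpha,\beta]$, and pivoting at $c_{i}$ it is also uniformly bounded thanks to (b).

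Ascoli--Arzelà then delivers uniform convergence on $[\alpha,\beta]$ along a subsequence. I would exhaust each $(a_{i},b_{i})$ by countably many such compact subintervals and diagonalise across the \emph{finitely many} indices $i=1,\dots,N_{\eta}$, producing $(u_{h_{k}})$ that converges pointwise on $I_{\Omega,w}$ to $u(x):=v(|x|)$; this gives (i). For (ii) and (iii), on each compact $[\alpha,\beta]\subset(a_{i},b_{i})$ the derivatives $v_{h_{k}}'$ are bounded in the reflexive space $L^{p}([\alpha,\beta];\,r^{d-1}\eta\,\de r)$. A further subsequence converges weakly there, and uniform convergence of $v_{h_{k}}$ identifies the weak limit with the distributional derivative $v'$, so in particular $v\in W^{1,1}_{\rm loc}(I_{p,\supp(\eta)})$. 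Weak lower semicontinuity of the $L^{p}$-seminorm (convexity of $|\cdot|^{p}$) yields
\[
\int_{\alpha}^{\beta}|v'|^{p}\,r^{d-1}\eta\,\de r\le\liminf_{k\to\infty}\int_{\alpha}^{\beta}|v_{h_{k}}'|^{p}\,r^{d-1}\eta\,\de r;
\]
letting $[\alpha,\beta]\uparrow(a_{i},b_{i})$ by monotone convergence and summing over the finitely many $i$ delivers (iii), whence the weighted integral of $|\nabla u|^{p}$ is finite and $u\in{\rm Dom}_{{\rm r},w}$, which is (ii).

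The principal pressure point is the interplay between the finite-degeneracy hypothesis $N_{\eta}<+\infty$, which is what makes the diagonal extraction terminate and renders the final summation harmless, and the pivot condition (b), which is indispensable for pinning down the limit: equicontinuity on each $(a_{i},b_{i})$ alone would allow the $v_{h}$ to drift to different vertical levels on distinct components, or to blow up at the endpoints where $\eta$ degenerates. All the analytic work is fueled by the local integrability of $\eta^{-1/(p-1)}$ inside each $(a_{i},b_{i})$ built into the definition of $I_{p,\supp(\eta)}$, which is exactly the feature that fails at the boundary and explains why one must argue component by component rather than globally on $\supp(\eta)$.
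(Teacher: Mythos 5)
Your proposal is correct and rests on the same analytic engine as the paper's proof: reduction to one variable via radiality and polar coordinates, the uniform bound on $\int |v_h'|^p r^{d-1}\eta\,\de r$, the local integrability of $(r^{d-1}\eta)^{-1/(p-1)}$ inside each $(a_i,b_i)$ combined with H\"older, weak compactness of the derivatives in the reflexive weighted $L^p$ space, and weak lower semicontinuity for (iii). The one genuine divergence is in how the pointwise limit is produced. The paper never invokes Ascoli--Arzel\`a: it extracts the weak limit $v$ of the derivatives $v_{h_k}'$ in $L^p(I_{p,\supp(\eta)},\eta)$ and \emph{defines} the limit function explicitly on each annulus by $u^i(x)=d_i+\int_{c_i}^{|x|}v(r)\,\de r$, the pointwise convergence in (i) then following by testing the weak convergence against indicator functions of $[c_i,|x|]$ (admissible precisely because $(r^{d-1}\eta)^{-1/(p-1)}$ is locally summable). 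You instead first establish equicontinuity of the $v_h$ on compact subintervals and run Ascoli--Arzel\`a plus a diagonal extraction, obtaining locally uniform convergence, and only afterwards identify the weak limit of the derivatives with $v'$. Both routes are sound; yours buys a stronger mode of convergence (locally uniform rather than merely pointwise) at the cost of an extra compactness step, while the paper's construction is more economical and makes the role of the pivot values $d_i$ completely explicit. One small caveat: the diagonal extraction terminates over countably many constraints regardless of whether $N_\eta$ is finite, so finite degeneracy is not really what makes that step work; where it (and more precisely the nonnegativity of the summands) matters is in passing the $\liminf$ through the sum over $i$ in (iii), as the paper's display \eqref{finitamente1} indicates.
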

\begin{proof}
 Let us note that, by assumption (b), $I_{\Omega,w}\neq\emptyset$. By hypothesis, we have that $(u_{h})_{h}$ is a sequence of radial functions. That is,
\begin{align*}
 u_{h}(x)=v_{h}(\vert x\vert), \hskip 0,2cm \text{for all $h\in\mathbb{N}$, $x\in \Omega$.}
 \end{align*}
 Notice that 
\begin{align*}
\frac{\partial u_{h}(x)}{\partial x_{j}}=v_{h}'(\vert x\vert)\frac{x_{j}}{r}, \hskip 0,2cm\, r=\vert x \vert,\, j=1,\ldots,d,
\end{align*}
 and thus 
\begin{align*}
\int_{I_{\Omega,w}}|\nabla u_h|^{p}w\,\de x&=\int_{I_{\Omega,w}}\left[\left\{\sum_{j=1}^{d}|v_{h}'(\vert x\vert)|^{2}\frac{\vert x_{j}\vert^{2}}{\vert x\vert^{2}}\right\}^{\frac{1}{2}}\right]^{p}w\,\de x\\
&=\int_{I_{\Omega,w}}|v_{h}'(\vert x\vert)|^{p}w\,\de x.
\end{align*}
Moreover, since $w$ is radial, we have that by the change of variable theorem
\begin{align*}
\displaystyle\int_{I_{\Omega,w}}|v_{h}'(\vert x\vert)|^{p}w\,\de x&=\displaystyle\int_{I_{\Omega,w}}|v_{h}'(\vert x\vert)|^{p}\eta(\vert x\vert)\,\de x\\
&=\omega_{d}\displaystyle\int_{I_{p,\supp(\eta)}}\hskip -0,5cm|v_{h}'(r)|^{p}r^{d-1}\eta(r)\,\de r,
\end{align*}
where 
\begin{align}
\label{misurasfera}
\omega_{d}\coloneqq \mathcal{H}^{d-1}(\mathbb{S}^{d}).
\end{align}
By (a), notice that 
\begin{align*}
\sup_{h\in \mathbb{N}}\displaystyle\int_{I_{p,\supp(\eta)}}\hskip -0,5cm|v_{h}'(r)|^{p}r^{d-1}\eta(r)\,\de r<+\infty.
\end{align*}
Then there exist a subsequence $(v_{h_k})_k$ of $(v_h)_h$, 
and  a function $v\in L^{p}(I_{p,\supp(\eta)},\eta)$ such that 
\begin{equation}\label{(2)}
v_{h_k}'\to v {\ \ \rm weakly \ in \ }L^{p}(I_{p,\supp(\eta)},\eta)\text{ as }k\to\infty\,,
\end{equation} 
\begin{equation*}
\frac{\partial u_{h_{k}}}{\partial x_{j}}\to v(\vert x\vert)\frac{x_{j}}{\vert x\vert}{\ \ \rm weakly \ in \ }L^{p}(I_{\Omega,w},w)\text{ as }k\to\infty\,, j=1,\ldots,d.
\end{equation*} 
Moreover, let us observe that
\begin{equation*}\label{(3)}
L^{p}_{\rm{loc}}(I_{\Omega,w},w)\subset L^1_{\rm{loc}}(I_{\Omega,w})\,, \hskip 0,5cm L^{p}_{\rm{loc}}(I_{p,\supp(\eta)},\eta)\subset L^1_{\rm{loc}}(I_{p,\supp(\eta)}).
\end{equation*}
Indeed, let us notice that by the change of variable
\begin{align*}
\displaystyle\int_{I_{\Omega,w}} w^{-\frac{p'}{p}} \de x=\omega_{d}\displaystyle\int_{I_{p,\supp(\eta)}} \eta^{-\frac{p'}{p}}r^{d-1} \de r,
\end{align*}
where $\frac{1}{p}+\frac{1}{p'}=1$ . Then for $i=1,\ldots, N_{\eta}$, and for each $K\Subset (a_{i},b_{i})$, we get by \eqref{decomposet}, and since $\frac{p'}{p}=\frac{1}{p-1}$ that
\begin{align*}
\displaystyle\int_{K} \eta^{-\frac{p'}{p}}r^{d-1} \de r\leq b_{i}^{d-1}\displaystyle\int_{K} \eta^{-\frac{p'}{p}}\de r<+\infty.
\end{align*}
Let us take a compact set $\mathcal{K}\Subset I_{\alpha_{i},\beta_{i}}$, $i=1,\ldots, N_{\eta}$ with $I_{\alpha_{i},\beta_{i}}$ as defined in \eqref{newset}. Then by H\hol{o}lder's inequality
\begin{align}
\label{usefulbound}
\displaystyle\int_{\mathcal{K}}\vert  z\vert \de x\leq \left(\displaystyle\int_{\mathcal{K}}\vert  z\vert^{p}w\de x\right)^{\frac{1}{p}}\left(\displaystyle\int_{\mathcal{K}} w^{-\frac{p'}{p}} \de x\right)^{\frac{1}{p'}}<+\infty
\end{align}
for any $z\in L^{p}(\mathcal{K},w)$. Notice that from \eqref{(2)} and \eqref{usefulbound}, we get that $v\in L^1_{\rm{loc}}(I_{\alpha,\beta};\mathbb{R})$ and
\begin{equation*}\label{4}
\displaystyle\int_{I_{\alpha,\beta}} \left\vert \frac{\partial u_{h_k}}{\partial x_{j}}- v(\vert x\vert)\frac{x_{j}}{\vert x\vert}\right\vert \,\de x\rightarrow 0\, \text{ as }k\to\infty\,,
\end{equation*}
for each $[\alpha,\beta]\subset\,I_{p,{\mathrm supp}(\eta)}$.
Let us consider $u:\,\Omega\to\R$ defined in the following way:  let $i=\,1,\dots,N_\eta$, and consider an interval $(a_{i}, b_{i})$ and define $I_{a_{i},b_{i}}$ as in \eqref{newset}, so that we let
\begin{equation*}
u^{i}(x):=
\begin{cases}
\ \ \ \ \ 0 \ \ \ \ &\text{ if } x\in\Omega\setminus I_{a_{i},b_{i}},\,
\\
\displaystyle d_{i}+\int_{c_{i}}^{\vert x\vert}v(r)\de r&\text{ if } c_{i}\leq \vert x\vert<b_{i},\\
\displaystyle d_{i}+\int_{\vert x\vert}^{c_{i}}v(r)\de r&\text{ if } a_{i}\leq \vert x\vert<c_{i}.
\end{cases}
\end{equation*}
Then we define
$$
u(x)=\sum_{i=1}^{N_{\eta}} u^i(x)  \chi_{I_{a_{i},b_{i}}}(x) .
$$
In what follows, we aim to prove that
\begin{equation*}\label{aimed}
u\in W^{1,1}_{\text {loc}}(I_{\Omega, w}).
\end{equation*}
As before, let us take some $\mathcal{K}\Subset I_{\alpha_{i},\beta_{i}}$, $i=i,\ldots, N_{\eta}$. First, notice that by the fundamental theorem of calculus, one has that
\begin{align*}
\frac{\partial u}{\partial x_{j}}=v(\vert x\vert)\frac{x_{j}}{\vert x\vert},
\end{align*}
and then
\begin{align*}
\sum_{j=1}^{d}\displaystyle\int_{\mathcal{K}}\left\vert \frac{\partial u}{\partial x_{j}}\right\vert\de x\leq\displaystyle\int_{\mathcal{K}} \vert v(\vert x \vert)\vert \de x
&\leq \omega_{d}\displaystyle\int_{a_{i}}^{b_{i}}\hskip -0,2cm r^{d-1}v(r)\de r<+\infty,
\end{align*}
where $\omega_{d}$ is defined according \eqref{misurasfera}. Moreover, let us notice that 
\begin{align}\label{finitamente1}
\displaystyle \int_{I_{\Omega,w}}\vert \nabla u\vert^{p}w\de x=\displaystyle \int_{I_{\Omega,w}}\vert v'(\vert x\vert)\vert^{p}w\de x&=\omega_{d}\int_{I_{p,\supp(\eta)}}\vert v'(r)\vert^{p} r^{d-1}\eta(r) \de r\\
&\leq \omega_{d}\liminf_{h_{k}}\displaystyle\int_{I_{p,\supp(\eta)}}\vert v_{h_{k}}'(r)\vert^{p} r^{d-1}\eta(r) \de r\nonumber\\
&=\liminf_{h_{k}}\displaystyle\int_{I_{\Omega,w}}\vert \nabla u_{h_{k}}\vert^{p}w\de x<+\infty,\nonumber
\end{align}
where in \eqref{finitamente1} we have used that $\eta$ is finitely degenerate, and in the last inequality we have used the radiality and assumption (a).
\end{proof}
\subsection{An auxiliary weight}
In what follows, by following closely \cite{CCMH1} we define a suitable weight $\hat{w}_{p}$ for $1<p<+\infty$ for which it is possible to prove a Poincar\'e inequality involving $w$ and $(\hat{w}_{p})^{p-1}$.
Let $w:\,\Omega\to [0,\infty)$ be a function satisfying \eqref{minassweight} and \eqref{minassweight3}. We let $\wstar_{p}:\Omega\to [0,+\infty[$ be defined as
\begin{equation}\label{pesop}
\wstar_{p}(x):=
\begin{cases}
\displaystyle\lim_{\vert x\vert\to a_i^+}  \left(\int_{I_{\vert x\vert,\frac{a_{i}+b_{i}}{2}}}\frac{1}{[\vert y\vert^{(d-1)p} w(y)]^{\frac{1}{p-1}}}\,\de y\right)^{-1}    &\text{ if } \vert x\vert=a_i \\
\left(\displaystyle\int_{I_{\vert x\vert,\frac{a_{i}+b_{i}}{2}}}\frac{1}{[\vert y\vert^{(d-1)p} w(y)]^{\frac{1}{p-1}}}\,\de y\right)^{-1} &\text{ if } {a_i}< \vert x\vert \leq {{3a_i+b_i}\over{4}}\\
\left(\displaystyle\int_{I_{\frac{3a_{i}+b_{i}}{4},\frac{3a_{i}+b_{i}}{4}}}\frac{1}{[\vert y\vert^{(d-1)p} w(y)]^{\frac{1}{p-1}}}\,\de y\right)^{-1} &\text{ if } {{3a_i+b_i}\over{4}}\leq \vert x\vert\leq {{a_i+3b_i}\over{4}}\\
\left(\displaystyle\int_{I_{\frac{a_{i}+b_{i}}{2},\vert x\vert }}\frac{1}{[\vert y\vert^{(d-1)p} w(y)]^{\frac{1}{p-1}}}\,\de y\right)^{-1} &\text{ if } {{a_i+3b_i}\over{4}}\leq \vert x\vert<b_i\\
\displaystyle\lim_{\vert x\vert\to b_i^-}  \left(\displaystyle\int_{I_{\frac{a_{i}+b_{i}}{2},\vert x\vert}}\frac{1}{[\vert y\vert^{(d-1)p} w(y)]^{\frac{1}{p-1
}}}\,\de y\right)^{-1}    &\text{ if } \vert x\vert=b_i\\
\ \ \ \ \ \ \ \ \ 0&\text{ if } x\in \Omega\setminus \overline{I_{\Omega,w}}\,.
\end{cases}
\end{equation}
\begin{oss}\label{newoss}
Let us point out that the definition of $\hat{w}_{p}$ is the extension of the one considered in \cite{CCMH1} to our multivariate context. Like as in the one-dimensional case, its definition heavily depends on $p$, and it is defined as the inverse of an multivariate integral term along annular regions (or spherical shells) dictated by the decomposition of the weight $\eta$. This  allows us to use its nice properties, such as continuity, that  is needed to prove Proposition \ref{propw}. Furthermore, let us also notice that $\wstar_{p}$ still is a radial function as the original weight $w$. That is,
\begin{align*}
\label{relation}
\wstar_{p}(\cdot)=\omega_{d}^{-1}\hat{\eta}_{p}(\vert \cdot\vert),
\end{align*}
where
\begin{equation*}
\hat{\eta}_{p}(t):=
\begin{cases}
\displaystyle\lim_{t\to a_i^+}  \left(\int_{t}^{\frac{a_{i}+b_{i}}{2}}\frac{1}{(s^{d-1}\eta(s))^{\frac{1}{p-1}}}\,\de s\right)^{-1}    &\text{ if } t=a_i \\
\left(\displaystyle\int_{t}^{\frac{a_{i}+b_{i}}{2}}\frac{1}{(s^{d-1}\eta(s))^{\frac{1}{p-1}}}\,\de s\right)^{-1} &\text{ if } {a_i}< t \leq {{3a_i+b_i}\over{4}}\\
\left(\displaystyle\int_{\frac{3a_{i}+b_{i}}{4}}^{\frac{3a_{i}+b_{i}}{4}}\frac{1}{(s^{d-1}\eta(s))^{\frac{1}{p-1}}}\,\de s\right)^{-1} &\text{ if } {{3a_i+b_i}\over{4}}\leq t\leq {{a_i+3b_i}\over{4}}\\
\left(\displaystyle\int_{\frac{a_{i}+b_{i}}{2}}^{t}\frac{1}{(s^{d-1}\eta(s))^{\frac{1}{p-1}}}\,\de s\right)^{-1} &\text{ if } {{a_i+3b_i}\over{4}}\leq t<b_i\\
\displaystyle\lim_{t\to b_i^-}  \left(\displaystyle\int_{\frac{a_{i}+b_{i}}{2}}^{t}\frac{1}{(s^{d-1}\eta(s))^{\frac{1}{p-1}}}\,\de s\right)^{-1}    &\text{ if } t=b_i\\
\ \ \ \ \ \ \ \ \ 0&\text{ if } t\in \supp(\eta)\setminus \overline{I_{p,\supp(\eta)}}\,.
\end{cases}
\end{equation*}
\end{oss}
\begin{oss}\label{serra}
As in \cite[Sect. 4.2]{CC} we could introduce a truncated weight
\begin{equation*}\label{tildeeta}
\tilde \eta_p(r):=\,\min\{\eta(r),\,\hat{\eta}_{p}(r),\,1\}\,, \ \ \ r\in [0,+\infty]
\end{equation*}
if $r\in\Omega$ is a Lebesgue's point of $\eta$ and $0$ otherwise, and the corresponding weight
\begin{equation*}\label{tildew}
\tilde w_p(x):=\tilde \eta_p(|x|)=\min\{w(x),\,\wstar_p(x),\,1\}.
\end{equation*}
As in \cite[Proposition 4.6]{CC} $\tildew\in L^\infty(\Omega)$ and
\begin{equation}\label{incltildew}
L^2(\Omega,\wstar_p)\cup L^2(\Omega,w)\cup L^2(\Omega)\subset L^2(\Omega,\tildew_p)\,.
\end{equation}
\end{oss}
In the next Lemma we prove that \eqref{pesop} is well-defined. We only need to observe that
\begin{align}
\label{equivalenza}
I_{p,\supp(r^{d-1}\eta(\cdot))}=I_{p,\supp(\eta)}.
\end{align}
\begin{lem}
Let us consider the $I_{p,\supp(\eta)}$, and $I_{p,\supp(r^{d-1}\eta(\cdot))}$ defined according to \eqref{decomposet}. Then \eqref{equivalenza} holds true.
\end{lem}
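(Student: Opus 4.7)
The plan is to compare the two integrability conditions pointwise through the factor $r^{d-1}$, which is locally bounded above and below by positive constants on any interval bounded away from $r=0$. Since every interior point of $\supp(\eta)\subseteq(a,b)$ with $a\geq 0$ is strictly positive, this factor causes no trouble in the relevant integrals.

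First I would verify that the two supports share the same interior. Indeed, $r^{d-1}\eta(r)$ and $\eta(r)$ are nonzero on the same set except possibly at the single point $r=0$, and removing one point does not alter closures on $[0,+\infty)$. Hence $(\supp(\eta))^{\circ}=(\supp(r^{d-1}\eta(\cdot)))^{\circ}$, so both sides of \eqref{equivalenza} are being searched among the same candidate points.

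Next, fix $r_0$ in this common interior. Since necessarily $r_0>0$, I can choose $\epsilon_0>0$ with $[r_0-\epsilon_0,r_0+\epsilon_0]\subset (0,+\infty)\cap(\supp(\eta))^{\circ}$; on such an interval there exist $0<c_1\leq c_2<+\infty$ such that $c_1\leq r^{(d-1)/(p-1)}\leq c_2$. Writing
$$
(r^{d-1}\eta(r))^{-\frac{1}{p-1}} = r^{-\frac{d-1}{p-1}}\,\eta(r)^{-\frac{1}{p-1}},
$$
I obtain the two-sided comparison
$$
c_2^{-1}\,\eta(r)^{-\frac{1}{p-1}} \leq (r^{d-1}\eta(r))^{-\frac{1}{p-1}} \leq c_1^{-1}\,\eta(r)^{-\frac{1}{p-1}}
$$
on $(r_0-\epsilon_0,r_0+\epsilon_0)$. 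Therefore local $L^1$-integrability of one function near $r_0$ is equivalent to local $L^1$-integrability of the other, which immediately yields both inclusions in \eqref{equivalenza}.

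There is essentially no hard step: the only point of care is ensuring the lower bound on $r^{d-1}$, which requires the relevant interval to be bounded away from $r=0$. This in turn follows from $r_0$ being an interior point of $\supp(\eta)\subseteq(a,b)$ with $a\geq 0$. Apart from that subtle point, the argument is a one-line weight comparison and the conclusion is immediate.
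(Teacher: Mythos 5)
Your proof is correct and follows essentially the same route as the paper's: both arguments compare the integrands $(s^{d-1}\eta(s))^{-\frac{1}{p-1}}$ and $\eta(s)^{-\frac{1}{p-1}}$ on a small interval around the given point, using that $s^{d-1}$ is bounded above and below by positive constants there. Your version is if anything slightly more careful, since you explicitly note that the interval must be taken away from $s=0$ (the paper's bound by $(r-\varepsilon)^{-\frac{d-1}{p-1}}$ tacitly assumes $r-\varepsilon>0$) and that the two supports have the same interior, a point the paper leaves implicit.
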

\begin{proof}
Let us first prove that $I_{p,\supp(\eta)}\subset I_{p,\supp(r^{d-1}\eta(\cdot))}$. Take $r\in I_{p,\supp(\eta)}$. Then there exists $\varepsilon>0$ such that $\eta^{-\frac{1}{p-1}}\in L^{1}(r-\varepsilon,r+\varepsilon)$. Notice that
\begin{align*}
\displaystyle\int_{r-\varepsilon}^{r+\varepsilon}\frac{1}{[s^{d-1}\eta(s)]^{\frac{1}{p-1}}}\de s\leq \frac{1}{[r-\varepsilon]^{\frac{d-1}{p-1}}}\displaystyle\int_{r-\varepsilon}^{r+\varepsilon}\frac{1}{[\eta(s)]^{\frac{1}{p-1}}}\de s<+\infty
\end{align*}
and thus $r\in I_{p,\supp(r^{d-1}\eta(\cdot))}$. Let us prove the reverse inclusion. Let us take $r\in I_{p,\supp(r^{d-1}\eta(\cdot))}$. Notice that
\begin{align*}
\displaystyle\int_{r-\varepsilon}^{r+\varepsilon}\frac{1}{[\eta(s)]^{\frac{1}{p-1}}}\de s&\leq (r+\varepsilon)^{\frac{d-1}{p-1}}\displaystyle\int_{r-\varepsilon}^{r+\varepsilon}\frac{1}{[s^{d-1}\eta(s)]^{\frac{1}{p-1}}}\de s<+\infty,
\end{align*}
and we are done.
\end{proof}
The previous function $\hat{w}_{p}$ will play an important role in the relaxation of $F$. In particular, we will consider its relaxation involving the $L^{p}(\Omega, (\hat{w}_{p})^{p-1})$-convergence. Before providing the precise details of how we relax $F$, let us before reproduce  some properties of the functions $\wstar_{p}$ in the following proposition closely to the one obtained in \cite[Proposition 2.5]{CCMH1}.  To this aim, we need to introduce the following notion of increasing, and decreasing functions along curves.
\begin{defi}
Let us suppose that we have a curve $\gamma:[0,1]\rightarrow \R^{d}$.
\begin{enumerate}
\item[I.]We say that a function $u:\R^{d}\rightarrow \R$ is increasing along $\gamma$ if the following holds true.
\begin{align}
\text{If $0\leq t_{1}<t_{2}\leq 1$, and $\vert \gamma(t_{1})\vert \leq \vert \gamma(t_{2})\vert$ implies that $u(\gamma(t_{1}))\leq u(\gamma(t_{2}))$.}
\end{align}
\item[II.] We say that a function $u:\R^{d}\rightarrow \R$ is decreasing along $\gamma$ if the following holds true.
\begin{align}
\text{If $0\leq t_{1}<t_{2}\leq 1$, and $\vert \gamma(t_{1})\vert \leq \vert \gamma(t_{2})\vert$ implies that $u(\gamma(t_{2}))\leq u(\gamma(t_{1}))$.}
\end{align}
\end{enumerate}
\end{defi}
\begin{prop}\label{propw}
\begin{itemize}\
\item[(i)] Suppose that $w^{-\frac{1}{p-1}}$ is  not locally summable in $\Omega$, that is, $I_{\Omega,w}=\,\emptyset$. Then $\wstar_{p}\equiv 0$.
\item[(ii)] For each $i=1,\dots, N_\eta$, let us define $I_{\alpha_{i},\beta_{i}}$ $\alpha_{i}<\beta_{i}$ as in \eqref{newset} for two generic numbers $\alpha_{i},\beta_{i}$. $\wstar_{p}$ is constant in $I_{{{3a_i+b_i}\over{4}},{{a_i+3b_i}\over{4}}}$, increasing along the curve 
$\gamma_{1}(t)=(1-t)x+ty$ which is contained in $I_{{a_i},{{3a_i+b_i}\over{4}}}$, and  where $\vert x\vert=a_{i}$, $\vert y\vert=\frac{3a_i+b_i}{4}$, and $t\in [0,1]$, decreasing along the curve $\gamma_{2}(t)=(1-t)x+ty$, where $\vert x \vert=\frac{a_i+3b_i}{4}$, $\vert y\vert=b_{i}$ which is contained in $I_{{{a_i+3b_i}\over{4}},b_i}$ and absolutely continuous along $\gamma_{1}(t)$ and $\gamma_{2}(t)$, $t\in [0,1]$.  In particular, the following hold true:
\[
0\,<\wstar_{p}(x)\le\,\sup_{y\in I_{a_i,b_i}}\wstar_{p}(y)<\,\infty\quad\forall\,  x \in I_{a_i,b_i}\,,
\]
\[
\inf_{x\in I_{\alpha,\beta}}w(x)>\,0\text{ for each $x\in I_{\alpha,\beta}$, $a_i<\,\alpha<\,\beta<\,b_i$, }
\]
and $\wstar_{p}(a_i)=\,0$ (respectively $\wstar_{p}(b_i)=\,0$) if and only if $w^{-\frac{1}{p-1}}\notin L^1\left(I_{a_i,\frac{a_i+b_i}{2}}\right)$ (respectively $w^{-\frac{1}{p-1}}\notin L^1\left(I_{\frac{a_i+b_i}{2},b_i}\right)$). 
\item[(iii)] We have
\begin{equation*}\label{derwstar}
\frac{\partial \wstar_{p}}{\partial x_{j}}
={{\left(\wstar_{p}\right)^2}\over{w^{\frac{1}{p-1}}}}\frac{x_{j}}{\vert x\vert}\quad\text{ a.e. in }I_{{a_i},{{3a_i+b_i}\over{4}}}\cup I_{{{a_i+3b_i}\over{4}},b_i}\,.
\end{equation*}
\item[(iv)] Suppose that $w^{-\frac{1}{p-1}}\in L^1(\Omega)$. Then there exists a  constant $c>\,0$ such that
\[
0<\,\frac{1}{c}\le\,\wstar_{p}(x)\le\,c\quad\text{ a.e. } x\in\Omega\,.
\]
\item[(v)] Suppose that $w$ is finitely degenerate in $\Omega$, that is, \eqref{minassweight3} holds with $1\le\,N_\eta<\,\infty$. Then there exists a  constant $c>\,0$ such that
\[
0\le\,\wstar_{p}(x)\le\,c\quad\text{ a.e. }x\in\Omega\,.
\]
\item[(vi)] Suppose that $w$ is not finitely degenerate in $\Omega$, that is, \eqref{minassweight3} holds with $N_\eta=\,\infty$. Then $\wstar_{p}\in L^\infty_{\rm loc}(I_{\Omega,w})$.
\end{itemize}
\end{prop}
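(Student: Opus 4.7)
The key observation is that, by Remark \ref{newoss}, the multivariate weight factors as $\hat w_{p}(x)=\omega_{d}^{-1}\hat\eta_{p}(|x|)$, with $\hat\eta_{p}$ the one-dimensional auxiliary weight. All six items then reduce to elementary statements about $\hat\eta_{p}$, which in turn follow from elementary analysis of the defining integrals, closely mimicking \cite[Proposition 2.5]{CCMH1}.

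Item (i) is immediate from the last branch of the definition \eqref{pesop}. For item (ii), fix an index $i$ and introduce the auxiliary functions
\[
\Phi_{-}(t):=\int_{t}^{(a_{i}+b_{i})/2}(s^{d-1}\eta(s))^{-1/(p-1)}\,ds,\qquad \Phi_{+}(t):=\int_{(a_{i}+b_{i})/2}^{t}(s^{d-1}\eta(s))^{-1/(p-1)}\,ds,
\]
so that $\hat\eta_{p}=1/\Phi_{-}$ on $(a_{i},(3a_{i}+b_{i})/4]$, is constant on the middle interval, and equals $1/\Phi_{+}$ on $[(a_{i}+3b_{i})/4,b_{i})$. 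Strict monotonicity of $\Phi_{\pm}$ in $t$ yields the claimed monotonicity of $\hat w_{p}$ along $\gamma_{1}$ and $\gamma_{2}$ (since $t\mapsto|\gamma_{j}(t)|$ is monotone on each piece). Absolute continuity along $\gamma_{1},\gamma_{2}$ follows from the $C^{1}$ regularity of $\hat\eta_{p}$ on $(a_{i},b_{i})$, granted by the fundamental theorem of calculus applied to $\Phi_{\pm}$, composed with the Lipschitz map $t\mapsto|\gamma_{j}(t)|$. The endpoint characterizations reduce to $\lim_{t\to a_{i}^{+}}\Phi_{-}(t)=+\infty$ iff $w^{-1/(p-1)}\notin L^{1}(I_{a_{i},(a_{i}+b_{i})/2})$, with the analogous statement at $b_{i}$; the two-sided positivity and finiteness of $\hat w_{p}$ on $I_{a_{i},b_{i}}$ come from the fact that $0<\Phi_{\pm}<+\infty$ in the interior, and from finiteness of the middle value.

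For item (iii) I would differentiate under the integral sign: $\partial_{j}\hat w_{p}(x)=\omega_{d}^{-1}\hat\eta_{p}'(|x|)\,x_{j}/|x|$, with $\hat\eta_{p}'(t)=-\Phi_{-}'(t)/\Phi_{-}(t)^{2}=\hat\eta_{p}(t)^{2}(t^{d-1}\eta(t))^{-1/(p-1)}$ on $(a_{i},(3a_{i}+b_{i})/4]$ (and an analogous sign on the right piece; in the middle region both sides vanish), which after reinserting $\hat\eta_{p}(|x|)=\omega_{d}\hat w_{p}(x)$ and $t^{d-1}\eta(t)=|x|^{d-1}w(x)$ produces the stated formula. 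Items (iv), (v) and (vi) are consequences of (ii): uniform two-sided bounds on $\hat w_{p}$ follow from uniform two-sided bounds on $\Phi_{\pm}$, which hold globally when $w^{-1/(p-1)}\in L^{1}(\Omega)$ (giving (iv)), hold on each $(a_{i},b_{i})$ (with the maximum attained on the middle region) so that after taking the maximum over the finitely many indices one obtains the global upper bound of (v), and hold on compact subsets of $I_{\Omega,w}$ since such compacts intersect only finitely many annular components and remain at a positive distance from the endpoints $\{a_{i},b_{i}\}_{i}$, giving (vi).

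The main obstacle I expect is rigorous handling of item (ii), specifically the absolute continuity along $\gamma_{1},\gamma_{2}$ up to (but not including) the radii $a_{i}$ and $b_{i}$, where $\hat\eta_{p}'$ may blow up; the point is that absolute continuity is claimed on subcurves whose images stay in the open interval, so one can restrict to compact sub-arcs on which $\hat\eta_{p}$ is genuinely $C^{1}$. A secondary subtlety is item (vi), where the decomposition of $I_{p,\mathrm{supp}(\eta)}$ may have countably many components possibly accumulating, so one must exploit the fact that a compact subset of $I_{\Omega,w}$ can only meet finitely many of them.
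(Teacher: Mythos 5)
Your proposal is correct and is, in substance, the paper's own argument: the paper states Proposition \ref{propw} without proof, deferring to \cite[Proposition 2.5]{CCMH1} via the radial identification $\wstar_{p}(x)=\omega_{d}^{-1}\hat{\eta}_{p}(|x|)$ of Remark \ref{newoss}, and your reduction to the one-dimensional functions $\Phi_{\pm}$ (monotonicity, endpoint limits by monotone convergence, finite subcovers for (vi)) is exactly that route, worked out in more detail than the paper provides. One bookkeeping caveat on item (iii): writing the defining integral in polar coordinates gives $G(t)=\omega_{d}\int_{t}^{(a_{i}+b_{i})/2}[s^{d-1}\eta(s)]^{-1/(p-1)}\,\de s$, so differentiating $1/G$ actually yields $\partial_{j}\wstar_{p}=\omega_{d}\,(\wstar_{p})^{2}\,[\,|x|^{d-1}w(x)]^{-1/(p-1)}\,x_{j}/|x|$, which agrees with the displayed identity only up to the factor $\omega_{d}\,|x|^{-(d-1)/(p-1)}$; your claim that the computation "produces the stated formula" should therefore be qualified (or the paper's formula read as absorbing that radial factor).
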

\begin{oss}
Let us notice that since $\wstar_{p}$ is a radial function, along every increasing and  open curve $\gamma: [0,1]\rightarrow I_{a_{i},b_{i}}$, $i=1,\ldots, N_{\eta}$, the statements of Proposition \autoref{propw} hold true.  Furthermore, as already pointed out in the onedimensional (see \cite[Remark 2.6]{CCMH1}), if $w$ is not finitely degenerate in some open set $\Omega$, then it can also happens that $\wstar_{p}\notin L^{1}(\Omega)$. Here, the example given in \cite[Remark 2.6]{CCMH1} serves as a recipient to the present multidimensional case. Indeed, suppose that $\supp(\eta)\subset (0,1)$ and let $(a_{i},b_{i})$, $i=1,\ldots, +\infty,$ be a sequence of disjoint open intervals in $(0,1)$ and $m_{i}$ be a sequence of positive real numbers which will be fixed later on. Let $\eta:(0,1)\rightarrow [0,+\infty)$ defined as follows:
\begin{align*}
\eta(r)\coloneqq
\begin{cases}
m_{i}r^{1-d}(r-a_{i})^{\alpha}& \hskip 0,1cm \text{if $a_{i}\leq r\leq \frac{a_{i}+b_{i}}{2}$,}\\
m_{i}r^{1-d}(b_{i}-r)^{\alpha}& \hskip 0,1cm \text{if $\frac{a_{i}+b_{i}}{2}\leq r\leq b_{i}$,}\\
0&\hskip 0,1cm \text{otherwise}.
\end{cases}
\end{align*}
Let us fix $a_{i}\leq r\leq \frac{3a_{i}+b_{i}}{4}$. Then the corresponding auxiliar weight associated to $\eta$ is given by 
\begin{align*}
\hat{\eta}_{p}(r)=\frac{(\alpha_{p}-1)m_{i}^{\frac{1}{p-1}}(r-a_{i})^{\alpha_{p}-1}}{1-\left(\frac{2(r-a_{i})}{b_{i}-a_{i}}\right)^{\alpha_{p}-1}},
\end{align*}
where $\alpha_{p}\coloneqq \frac{\alpha}{p-1}$, and thus $\wstar_{p}(x)=\omega_{d}^{-1}\hat{\eta}_{p}(\vert x\vert)$, $x\in B_{1}(0)$ the $d$-dimensional ball of radius $1$. By \cite[Remark 2.6]{CCMH1}, one gets that $\hat{\eta}_{p}\notin L^{1}((0,1))$, and thus $\wstar_{p}\notin L^{1}(\Omega)$.
\end{oss}
\section{Weighted Poincar\'e inequalities}\label{sec:3}
\subsection{A Poincar\'e inequality with a double weight}
In what follows, we derive a weighted Poincar\'e inequality that we use later on. We first state some preliminary lemmas.
\begin{prop}\label {poincarep}
Let $d\geq 1$ be a natural number and define $\omega_{d}$ as in \eqref{misurasfera}. Let us consider a fixed $u\in {\rm{Dom}}_{{\mathrm r},w}$, $i=1,\dots,N_\eta$, and let $\frac{1}{p}+\frac{1}{p\prime}=1$. Let us take $\zeta,x\in I_{\Omega,w}$ such that $a_i<\vert\zeta\vert\leq \vert x\vert\leq {{a_i+b_i}\over{2}}$. The following hold true:
\begin{equation}\label{a1}
|u(x)-u(\zeta)|\omega_{d}\,\sqrt[p\prime]{\wstar_{p}(\zeta)}
\leq \left(\int_{I_{\vert \zeta\vert,\vert x\vert}}|\nabla u(y)|^{p}w(y)\,\de y\right)^{1\over p}\,;
\end{equation}
\begin{equation}\label{a2}
|u(\zeta)|^{p}(\wstar_{p}(\zeta))^{p-1}\omega_{d}^{p}\leq 2^{p-1}\left(|u(x)|^{p}(\wstar_{p}(\zeta))^{p-1}\omega_{d}^{p}+\int_{I_{a_{i},\vert x \vert}}|\nabla u(y)|^{p}w(y)\,\de y\,\right).
\end{equation}
Let us take $\zeta,x$ such that ${{a_i+b_i}\over{2}}\leq \vert x\vert\leq \vert\zeta\vert<b_i$. The following hold true:
\begin{equation}\label{a3}
|u(x)-u(\zeta)|\omega_{d}\,\sqrt[p\prime]{\wstar_{p}(\zeta)}
\leq \left(\int_{I_{\vert x\vert,\vert \zeta\vert}}|\nabla u(y)|^{p}w(y)\,\de y\right)^{1\over {p}}\,;
\end{equation}
\begin{equation}\label{a4}
|u(\zeta)|^{p}\omega_{d}^{p}(\wstar_{p}(\zeta))^{p-1}\leq 2^{p-1}\left(|u(x)|^{p}\omega_{d}^{p}(\wstar_{p}(\zeta))^{p-1}+\int_{I_{\vert x\vert,b_i}}|\nabla u(y)|^{p}w(y)\,\de y\,\right).
\end{equation}
\end{prop}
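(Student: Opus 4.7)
The plan is to exploit the radial structure of $u$ to reduce everything to a one-dimensional Hölder estimate, and then recognize that the resulting one-dimensional integral is precisely the reciprocal of $\wstar_{p}(\zeta)$ up to the constant $\omega_{d}$. Since $u\in {\rm Dom}_{{\mathrm r},w}$, write $u(x)=v(|x|)$ with $v\in W^{1,1}_{\rm loc}(I_{p,\supp(\eta)})$. Then for $a_i<|\zeta|\leq|x|\leq (a_i+b_i)/2$, by the fundamental theorem of calculus
\[
|u(x)-u(\zeta)|=\Big|\int_{|\zeta|}^{|x|}v'(s)\,\de s\Big|,
\]
and splitting the integrand as $|v'(s)|(s^{d-1}\eta(s))^{1/p}\cdot (s^{d-1}\eta(s))^{-1/p}$ and applying Hölder with exponents $p,p'$ gives
\[
|u(x)-u(\zeta)|\leq\Big(\int_{|\zeta|}^{|x|}|v'(s)|^{p}s^{d-1}\eta(s)\,\de s\Big)^{1/p}\Big(\int_{|\zeta|}^{|x|}(s^{d-1}\eta(s))^{-1/(p-1)}\de s\Big)^{1/p'}.
\]

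Next I would convert both sides to the multivariate picture. Since $u$ is radial, $|\nabla u(y)|=|v'(|y|)|$ and polar coordinates yield
\[
\int_{I_{|\zeta|,|x|}}|\nabla u|^{p}w\,\de y=\omega_{d}\int_{|\zeta|}^{|x|}|v'(s)|^{p}s^{d-1}\eta(s)\,\de s.
\]
For the dual factor, the crucial step is to observe, via the same polar change of variables and the algebraic identity $(d-1)p/(p-1)-(d-1)=(d-1)/(p-1)$, that
\[
\int_{I_{|\zeta|,(a_i+b_i)/2}}\frac{\de y}{[|y|^{(d-1)p}w(y)]^{1/(p-1)}}=\omega_{d}\int_{|\zeta|}^{(a_i+b_i)/2}\frac{\de s}{(s^{d-1}\eta(s))^{1/(p-1)}},
\]
so that by the definition \eqref{pesop} of $\wstar_{p}$ (handling both the case $|\zeta|\leq (3a_i+b_i)/4$ directly, and the constant case $(3a_i+b_i)/4<|\zeta|\leq(a_i+b_i)/2$ by monotonicity since that integral is even smaller) one gets
\[
\int_{|\zeta|}^{|x|}(s^{d-1}\eta(s))^{-1/(p-1)}\de s\leq\int_{|\zeta|}^{(a_i+b_i)/2}(s^{d-1}\eta(s))^{-1/(p-1)}\de s=\frac{1}{\omega_{d}\wstar_{p}(\zeta)}.
\]
Substituting both identities into the Hölder estimate, rearranging, and using $1/p+1/p'=1$ to collapse the $\omega_{d}$-factors yields \eqref{a1}.

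For \eqref{a2}, I would start from the triangle inequality $|u(\zeta)|\leq|u(\zeta)-u(x)|+|u(x)|$, raise to the $p$-th power and apply the elementary convexity bound $(A+B)^{p}\leq 2^{p-1}(A^{p}+B^{p})$, then multiply through by $\omega_{d}^{p}\wstar_{p}(\zeta)^{p-1}$ and use \eqref{a1} to replace the first term, enlarging the domain of integration from $I_{|\zeta|,|x|}$ to $I_{a_{i},|x|}$ (valid since $|\zeta|>a_i$). Inequalities \eqref{a3} and \eqref{a4} are obtained by the same scheme, now integrating from $|x|$ to $|\zeta|$ on the right-hand branch ${(a_i+b_i)/2}\leq |x|\leq |\zeta|<b_i$, invoking the right-hand definition of $\wstar_{p}$ in \eqref{pesop}.

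The main technical obstacle is the bookkeeping in step~two: correctly handling the exponent algebra in the polar conversion of $[|y|^{(d-1)p}w(y)]^{-1/(p-1)}$, which is precisely designed so that the leftover radial measure $r^{d-1}\,\de r$ combines with $|y|^{-(d-1)p/(p-1)}$ to produce $(s^{d-1}\eta(s))^{-1/(p-1)}$. Once this identification is made, recognizing the right-hand side as $(\omega_{d}\wstar_{p}(\zeta))^{-1}$ becomes tautological, and the rest is standard.
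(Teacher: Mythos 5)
Your argument is correct and follows essentially the same route as the paper: reduce to the one-dimensional representative $v$ via radiality, apply the fundamental theorem of calculus and H\"older's inequality with the split $|v'|(s^{d-1}\eta)^{1/p}\cdot(s^{d-1}\eta)^{-1/p}$, convert both factors to annular integrals by polar coordinates, and identify the dual factor with $(\omega_d\wstar_p(\zeta))^{-1}$ (with the inequality, rather than equality, in the constant middle region $\tfrac{3a_i+b_i}{4}\le|\zeta|\le\tfrac{a_i+b_i}{2}$, which you correctly flag), after which \eqref{a2} and \eqref{a4} follow from the convexity bound $(A+B)^p\le 2^{p-1}(A^p+B^p)$ exactly as in the paper.
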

\proof 
In what follows, we closely follows the proof of \cite[Proposition 2.8]{CCMH1}. Let us consider $u\in {\rm{Dom}}_{{\mathrm r},w}$. By definition, one has that there exists $v\in {\rm{Dom}}_{\eta}$ such that 
\begin{align*}
u(x)=v(\vert x\vert) \text{ for a.e. $x\in I_{\Omega,w}, v\in W^{1,1}(I_{p,\supp(\eta)})$.}
\end{align*}
By the immersion of $W^{1,1}(I_{p,\supp(\eta)})$ into $AC(I_{p,\supp(\eta)})$ , we also have that $v\in AC_{\rm loc}((a_i,b_i))$, for all $i=1,\ldots, N_{w}$. Then for every $r_{1}, r_{2}\in]a_i,{{a_i+b_i}\over{2}}]$ such that $a_i<r_{2}\leq r_{1}\leq {{a_i+b_i}\over{2}}$ we have
\begin{equation*}
\begin{split}
|v(r_{1})-v(r_{2})|=&\left|\int_{r_{2}}^{r_{1}}v'(r)\,\de r\right|.
\end{split}
\end{equation*}
Notice that
\begin{equation*}
\begin{split}
|v(r_{1})-v(r_{2})|=&\left|\int_{r_{2}}^{r_{1}}v'(r)\,\de r\right|\leq
\left(\omega_{d}\int_{r_{2}}^{r_{1}}|v'(r)|^{p}r^{d-1}\eta(r)\,\de r\right)^{1\over p}
\left(\omega_{d}^{-\frac{p'}{p}}\int_{r_{2}}^{r_{1}} [r^{d-1}\eta]^{-\frac{p'}{p}}(r)\,\de r\right)^{1\over p'}\\
&  
\leq \left(\omega_{d}\int_{r_{2}}^{r_{1}}|v'(r)|^{p}r^{d-1}\eta(r)\,\de r\right)^{1\over p}
\left(\omega_{d}^{-\frac{p'}{p}}\int_{r_{2}}^{{{a_i+b_i}\over{2}}}[r^{d-1}\eta]^{-\frac{p'}{p}}(r)\,\de r\right)^{1\over p'}\,.
\end{split}
\end{equation*}
Then, by letting $\vert x\vert=r_{1}$, and $r_{2}=\vert \zeta\vert$ with $x,\zeta\in I_{a_{i},\frac{a_{i}+b_{i}}{2}}$, one gets
\begin{align*}
 \omega_{d}\displaystyle\int_{\vert\zeta\vert}^{\vert x\vert} \vert v'(r)\vert^{p}r^{d-1}\eta(r) \de r
&=\omega_{d}\displaystyle\int_{\vert\zeta\vert}^{\vert x\vert}\left\vert \frac{ v'(r)\sqrt{\sum_{j=1}^{d}\vert x_{j}\vert^{2}}}{r}\right\vert^{p}r^{d-1}\eta(r)\de r\\
&=\omega_{d}\displaystyle\int_{\vert\zeta\vert}^{\vert x\vert}\left\vert \frac{\sqrt{\sum_{j=1}^{d}\vert v'(r)x_{j}\vert^{2}}}{r}\right\vert^{p}r^{d-1}\eta(r)\de r\\
&=\omega_{d}\displaystyle\int_{\vert\zeta\vert}^{\vert x\vert}\left\vert \sqrt{\sum_{j=1}^{d}\left\vert \frac{\partial u}{\partial x_{j}}\right\vert^{2}}\right\vert^{p}r^{d-1}\eta(r)\de r\\
&=\displaystyle\int_{I_{\vert \zeta\vert,\vert x\vert}}\vert \nabla u\vert^{p}w\de y.
\end{align*}
Hence, we have obtained that
\begin{align}
\nonumber
|u(x)-u(\zeta)|&\leq \left(\displaystyle\int_{I_{\vert \zeta\vert,\vert x\vert}}\vert \nabla u\vert^{p}w\de y\right)^{\frac{1}{p}}\left(\omega_{d}^{-\frac{p'}{p}}\int_{\vert \zeta\vert}^{{{a_i+b_i}\over{2}}}[r^{d-1}\eta]^{-\frac{p'}{p}}(r)\,\de r\right)^{1\over p'}\\
&=\left(\displaystyle\int_{I_{\vert \zeta\vert,\vert x\vert}}\vert \nabla u\vert^{p}w\de y\right)^{\frac{1}{p}}\left(\omega_{d}^{1-p'}\int_{\vert \zeta\vert}^{{{a_i+b_i}\over{2}}}[r^{d-1}\eta]^{-\frac{p'}{p}}(r)\,\de r\right)^{1\over p'}\label{a5}\\
&=\left(\displaystyle\int_{I_{\vert \zeta\vert,\vert x\vert}}\vert \nabla u\vert^{p}w\de y\right)^{\frac{1}{p}}\left(\omega_{d}^{1-p'}\int_{\vert \zeta\vert}^{{{a_i+b_i}\over{2}}}r^{-(d-1)p'}r^{d-1}\eta^{-\frac{p'}{p}}(r)\,\de r\right)^{1\over p'}\nonumber\\
&=\left(\displaystyle\int_{I_{\vert \zeta\vert,\vert x\vert}}\vert \nabla u\vert^{p}w\de y\right)^{\frac{1}{p}}\left(\omega_{d}^{1-p'}\int_{I_{\vert \zeta\vert,{{a_i+b_i}\over{2}}}}[\vert y\vert^{(d-1)p} w(y)]^{-\frac{1}{p-1}}\,\de y\right)^{1\over p'}\nonumber,
\end{align}
where in equality we have used that $\frac{p'}{p}+1=p'$. Furthermore, notice that if $a_i<\vert\zeta\vert\le\min\{\frac{3a_i+b_i}4,\vert x\vert\}$, then \eqref{a1} follows by \eqref{a5} and the definition of $\wstar$. Furthermore, if $\frac{3a_i+b_i}4\le \vert \zeta\vert\le\,\frac{a_i+b_i}2$, since we have that
\[
\left(\int_{\vert\zeta\vert,{{a_i+b_i}\over{2}}}[\vert y\vert^{(d-1)p} w(y)]^{-\frac{1}{p-1}}\,\de y\right)^{1\over p\prime}\le\,\frac{1}{\sqrt[p\prime]{\wstar_{p}(\zeta)}}\,,
\]
\eqref{a1} still follows by \eqref{a5} and the definition of $\wstar_{p}$.
Then, since
\begin{equation*}
|u(\zeta)|^{p}\leq2^{p-1}\left(|u(x)|^{p}+|u(\zeta)-u(x)|^{p}\right)\,,
\end{equation*}
by \eqref{a1}, we then deduce \eqref{a2}. The remaining formulas \eqref{a3} and (\ref{a4}) follow  by arguing in a similar way.
\qed
\begin{oss}
Let us observe that the previous proposition is the multidimensional version of \cite[Proposition 2.8]{CCMH1}. Here, we have closely followed the argument used in the one-dimensional case, which relies on the fundamental theorem of calculus. However, the multivariate version of this theorem is somewhat different and is not directly related to absolutely continuous functions. Additionally, unlike the one-dimensional case, in our current setting we have a Poincaré inequality, which differs from its onedimensional counterpart by a factor of  $\omega_{d}^{p}$.
\end{oss}
Let us now give some consequences of Proposition \ref{poincarep} in the following Corollary.
\begin{cor}\label{proppp1}
Let us fix $u\in {\rm{Dom}}_{{\mathrm r},w}$, and $i=1,\dots,N_\eta$. Then the following hold true:
\begin{itemize}
\item[(i)]
$\displaystyle
|u(\zeta)|^{p}\omega_{d}^{p}(\wstar_{p}(\zeta))^{p-1}
\leq 2^{p-1}\left(\left|u\left(x\right)\right|^{p}\omega_{d}^{p}(\wstar_{p}(\zeta))^{p-1}
+\int_{I_{a_{i},b_{i}}}|\nabla u(y)|^{p}w(y)\,\de y\right)\,,
$
for each $\zeta\in I_{\Omega,w}$ with $\vert\zeta\vert\in (a_i,b_i)$, and for each $x\in I_{\Omega,w}$, such that $\vert x \vert=\frac{a_i+b_i}{2}$. Furthermore, $u\in L^{p}(I_{a_{i},b_{i}},(\wstar_{p})^{p-1})$, and if $N_{\eta}<+\infty$  then $u\in L^{p}(\Omega,(\wstar_{p})^{p-1})$.
\item[(ii)] Let us suppose that
\begin{align*}
\displaystyle{\int_{a_i}^{\frac{a_i+b_i}{2}}\frac{1}{[r^{d-1}\eta(r)]^{\frac{1}{p-1}}}\de r=+\infty}
\end{align*}
(respectively, suppose that $\displaystyle{\int_{\frac{a_i+b_i}{2}}^{b_i}\frac{1}{[r^{d-1}\eta(r)]^{\frac{1}{p-1}}}\de r=+\infty}$). Then there exists 
$$\displaystyle\lim_{\vert x\vert\to a_i^+} (u^{p} \,(\wstar_{p})^{p-1})(x)=\,0 \,(\text{respectively,}\lim_{\vert x\vert\to b_i^-} (u^{p} \,(\wstar_{p})^{p-1})(x)=\,0)\,.
$$
\item[(iii)] Suppose that 
\begin{align*}
\displaystyle{\int_{a_i}^{\frac{a_i+b_i}{2}}\frac{1}{[r^{d-1}\eta(r)]^{\frac{1}{p-1}}}\de r<\,\infty}
\end{align*}
(respectively, suppose that $\displaystyle{\int_{\frac{a_i+b_i}{2}}^{b_i}\frac{1}{[r^{d-1}\eta(r)]^{\frac{1}{p-1}}}\de x<\,\infty}$). Then 
\[u\in AC_{r}^{d}\Big(I_{a_i,\frac{a_i+b_i}{2}}\Big) \,(\text{respectively, }u\in AC_{r}^{d}\Big(I_{\frac{a_i+b_i}{2},b_i}\Big)\,.
\]
\end{itemize}
\end{cor}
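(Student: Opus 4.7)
\textbf{Plan for Corollary \autoref{proppp1}.}

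For part (i), the pointwise bound is immediate from Proposition \ref{poincarep}. Fix $x \in I_{\Omega,w}$ with $|x|=(a_i+b_i)/2$; then for any $\zeta \in I_{\Omega,w}$ with $|\zeta|\in(a_i,b_i)$, I apply \eqref{a2} if $|\zeta|\le(a_i+b_i)/2$ and \eqref{a4} if $|\zeta|\ge(a_i+b_i)/2$. In both cases the integral of $|\nabla u|^p w$ is taken over a subregion of $I_{a_i,b_i}$, so is bounded by $\int_{I_{a_i,b_i}}|\nabla u|^p w\,\de y$, which is finite since $u\in{\rm Dom}_{r,w}$. To obtain the $L^p$-integrability statement, I integrate this pointwise inequality in $\zeta$ over $I_{a_i,b_i}$: the gradient term produces a finite constant multiplied by $|I_{a_i,b_i}|$, while the term $|u(x)|^p(\wstar_p(\zeta))^{p-1}$ integrates to $|u(x)|^p\int_{I_{a_i,b_i}}(\wstar_p)^{p-1}\de\zeta$, which is finite because $\wstar_p\in L^\infty(I_{a_i,b_i})$ by Proposition \autoref{propw}(v)-(vi) together with the boundedness of $\Omega$. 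If $N_\eta<+\infty$, I sum over $i=1,\dots,N_\eta$, using that $\wstar_p\equiv 0$ on $\Omega\setminus\overline{I_{\Omega,w}}$.

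For part (ii), the strategy is to exploit the absolute continuity of the indefinite integral combined with the vanishing of $\wstar_p$ at the endpoint. Given $\varepsilon>0$, the finiteness of $\int_{I_{a_i,(a_i+b_i)/2}}|\nabla u|^p w\,\de y$ allows me to choose $\delta>0$ small so that $\int_{I_{a_i,a_i+\delta}}|\nabla u|^p w\,\de y<\varepsilon/2^p$. I then fix an auxiliary point $x_\varepsilon$ with $|x_\varepsilon|=a_i+\delta$ and apply \eqref{a2} at the pair $(\zeta,x_\varepsilon)$ for any $\zeta$ with $a_i<|\zeta|<a_i+\delta$, obtaining
\begin{equation*}
|u(\zeta)|^p\omega_d^p(\wstar_p(\zeta))^{p-1}\le 2^{p-1}|u(x_\varepsilon)|^p\omega_d^p(\wstar_p(\zeta))^{p-1}+\tfrac{\varepsilon}{2}.
\end{equation*}
Since $x_\varepsilon$ is now fixed (so $|u(x_\varepsilon)|$ is a fixed finite constant) and the divergence hypothesis forces $\wstar_p(\zeta)\to 0$ as $|\zeta|\to a_i^+$ (by definition \eqref{pesop} of $\hat w_p$), the first term on the right tends to zero, giving $\limsup_{|\zeta|\to a_i^+}|u(\zeta)|^p(\wstar_p(\zeta))^{p-1}\le\varepsilon/\omega_d^p$. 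As $\varepsilon$ is arbitrary, the limit is $0$. The case $|\zeta|\to b_i^-$ is symmetric, using \eqref{a4}.

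For part (iii), I work directly with the radial profile $v$. For $a_i\le r_1<r_2\le(a_i+b_i)/2$, Hölder's inequality with exponents $p,p'$ yields
\begin{equation*}
|v(r_2)-v(r_1)|\le\Bigl(\int_{r_1}^{r_2}|v'(r)|^p r^{d-1}\eta(r)\,\de r\Bigr)^{1/p}\Bigl(\int_{r_1}^{r_2}[r^{d-1}\eta(r)]^{-1/(p-1)}\de r\Bigr)^{1/p'}.
\end{equation*}
Under the finiteness hypothesis, both $|v'|^p r^{d-1}\eta$ and $[r^{d-1}\eta]^{-1/(p-1)}$ are integrable over $[a_i,(a_i+b_i)/2]$. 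For any disjoint family $\{(r_1^k,r_2^k)\}$, summing the above, applying Hölder once more over the index $k$, and using absolute continuity of both integrals with respect to the Lebesgue measure of $E=\bigcup_k(r_1^k,r_2^k)$, I obtain the absolute continuity of $v$ on $[a_i,(a_i+b_i)/2]$, hence $u\in AC_r^d(I_{a_i,(a_i+b_i)/2})$. The other case is analogous.

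The main technical obstacle is part (ii): the natural application of \eqref{a2} with a \emph{fixed} reference point $x$ only gives a bound with an additive constant coming from $\int_{I_{a_i,|x|}}|\nabla u|^p w$, which does not vanish as $|\zeta|\to a_i^+$. The key idea is to let $x=x_\varepsilon$ depend on $\varepsilon$ and exploit absolute continuity of the integral near $a_i$ before letting $\zeta$ approach $a_i$; this order of quantifiers is essential.
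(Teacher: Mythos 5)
Your proposal is correct and follows essentially the same route as the paper: part (i) is \eqref{a2} and \eqref{a4} with $|x|=\frac{a_i+b_i}{2}$ integrated against the bounded weight $(\wstar_p)^{p-1}$ on $I_{a_i,b_i}$, and part (ii) combines the vanishing of $\wstar_p$ at the degenerate endpoint with absolute continuity of $\int|\nabla u|^p w$ near $a_i$ (your $\varepsilon$--$x_\varepsilon$ formulation is exactly the paper's iterated limit, first $\limsup_{|\zeta|\to a_i^+}$ with $x$ fixed, then $|x|\to a_i^+$). For part (iii) you use the same weighted H\"older estimate; the only cosmetic difference is that you verify the $\varepsilon$--$\delta$ definition of absolute continuity directly, whereas the paper notes $v'\in L^1$ up to $a_i$ and reduces the claim to the existence of $\lim_{|\zeta|\to a_i^+}u(\zeta)$.
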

\proof
(i) Note that by (\ref{a1}) and  (\ref{a2}) with $x$ such that $\vert x\vert={{a_i+b_i}\over{2}}$, we can obtain the desired inequality. Let us now justify (ii).  Consider $\zeta,x\in I_{\Omega,w}$ such that $ a_i<\vert\zeta\vert\leq \vert x\vert\leq \frac{a_i+b_i}{2}$. Further, suppose that $\displaystyle{\int_{a_i}^{\frac{a_i+b_i}{2}}\frac{1}{[r^{d-1}\eta(r)]^{\frac{1}{p-1}}}\de r=+\infty}$. By the definition of $\wstar_{p}$ and its radiality, we obtain that $\lim_{\vert\zeta\vert\to a_i^+}\wstar_{p}(\zeta)=0$.
Furthermore,  for each $x\in I_{a_i,\frac{a_i+b_i}{2}}$, we have that by \eqref{a2} the following inequality holds true:
$$
\limsup_{\vert\zeta\vert\to a_i^+}|u(\eta)|^{p}\omega_{d}^{p}(\wstar_{p}(\eta))^{p-1}\leq 2^{p-1}\int_{I_{a_{i},\vert x\vert}}|\nabla u(y)|^{p}w\,\de y.
$$
Hence, by letting $\lim$ as $\vert x\vert\to a_i^+$ in the previous inequality, then
\[
\lim_{\vert \zeta\vert\to a_i^+}|u(\zeta)|^{p}(\wstar_{p}(\zeta))^{p-1}= 0\,.
\]
The same reasoning works for $\displaystyle{\int_{\frac{a_i+b_i}{2}}^{b_i}\frac{1}{[r^{d-1}\eta(r)]^{\frac{1}{p-1}}}\de x=+\infty}$ because the radiality of $\hat{w}_{p}$. Then, we immediately obtain that
\[
\lim_{\vert \zeta\vert\to b_i^-}|u(\zeta)|^{p}(\wstar_{p}(\zeta))^{p-1}= 0\,.
\]
(iii) To conclude, let us now suppose that $\displaystyle{\int_{a_i}^{\frac{a_i+b_i}{2}}\frac{1}{[r^{d-1}\eta(r)]^{\frac{1}{p-1}}}\de r<\,\infty}$. We now prove that $u\in AC_{r}^{d}\big(I_{a_i,\frac{a_i+b_i}{2}}\big)$. Since $u\in AC_{r}^{d}\big(I_{a_i+\delta,\frac{a_i+b_i}{2}}\big)$, for each $\delta>0$, it is sufficient to prove that there exists the following limit
\begin{equation}\label{popol}
\lim_{\vert \zeta\vert\to a_i^+}u(\zeta)\in\R.
\end{equation}
Indeed, since $u$ can be written in terms of a radial function $v$, one gets the following: consider $\zeta\in I_{a_i,\frac{a_i+b_i}{2}}$. Since $u(\cdot)=v(\vert \cdot\vert)$
\begin{equation}\label{mbmb}
v(\vert \zeta\vert)=v\Big(\frac{a_i+b_i}{2}\Big)-\int_{\vert\zeta\vert}^{\frac{a_i+b_i}{2}}v'(r) \de r,
\end{equation}
for some $x\in I_{\Omega,w}$ such that $\vert x\vert=\frac{a_i+b_i}{2}$. Furthermore, let us notice that
\begin{align}
\int_{\vert \zeta\vert}^{\frac{a_i+b_i}{2}}\vert v'(r)\vert \de r\leq &\left(\omega_{d}\int_{\vert \zeta\vert}^{\frac{a_i+b_i}{2}}\vert v'(r)\vert^{p}r^{d-1}\eta(r)\de r\right)^{\frac{1}{p}}\left(\omega_{d}^{-\frac{1}{p-1}}\int_{\vert \zeta\vert}^{\frac{a_i+b_i}{2}}(r^{d-1}\eta(r))^{-\frac{1}{p-1}}\de r\right)^{\frac{1}{p'}}\nonumber\\
\label{rtrt}
&\leq \left(\left(\frac{a_{i}+b_{i}}{2}\right)^{d-1}\omega_{d}\int_{\vert \zeta\vert}^{\frac{a_i+b_i}{2}}\vert v'(r)\vert^{p}\eta(r)\de r\right)^{\frac{1}{p}}\times\\
&\phantom{formulaformula}\times\left(\omega_{d}^{-\frac{1}{p-1}}\int_{\vert \zeta\vert}^{\frac{a_i+b_i}{2}}(r^{d-1}\eta(r))^{-\frac{1}{p-1}}\de r\right)^{\frac{1}{p'}}
<+\infty\nonumber.
\end{align}
Therefore, by \eqref{mbmb} and \eqref{rtrt} ,we then deduce the existence of the desired limit \eqref{popol}. Lastly, let us notice that the remaining case follows by using the previous reasoning.
\qed
In what follows, we state our Poincar\'e type inequality in higher dimension with respect to the weight function $(\hat{w}_{p})^{p-1}$.
\begin{teo}[Poincar\'e type inequality on ${\rm{Dom}}_{{\mathrm r},w}$]\label{Poin1}
Let $1\leq N_{\eta}\leq +\infty$. For every $u\in {\rm{Dom}}_{{\mathrm r},w}$,  there exists a family of points $x_{i}\in I_{a_{i},b_{i}}$ such that $\vert x_{i}\vert=\frac{a_i+b_i}{2}$, for $i=1,\ldots, N_{\eta}$ such that
\begin{equation}\label{poincareformula}
\sum_{i=1}^{N_{\eta}}\frac{\omega_{d}^{p-1}}{b_{i}-a_{i}}\int_{I_{a_{i},b_{i}}}
\left|u(\zeta)-u\left(x_{i}\right)\right|^{p}(\wstar_{p}(\zeta))^{p-1}\,\de \zeta\leq \int_{I_{\Omega,w}}|\nabla u(y)|^{p}w(y)\,\de y\,.
\end{equation}
\end{teo}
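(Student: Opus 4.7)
My plan is to derive the inequality by integrating the pointwise estimates of Proposition \ref{poincarep} and then reducing the computation to a one-dimensional Poincar\'e inequality via passage to polar coordinates. Fix $i\in\{1,\dots,N_\eta\}$ and pick any $x_i\in I_{a_i,b_i}$ with $|x_i|=(a_i+b_i)/2=:s_0$; the choice is immaterial because $u$ is radial, so $u(x_i)=v(s_0)$ regardless of the representative on the sphere. Raising \eqref{a1} for $|\zeta|\le s_0$ and \eqref{a3} for $|\zeta|\ge s_0$ to the $p$-th power, with $x=x_i$, produces the pointwise bound
\begin{equation*}
\omega_d^{p}\,|u(\zeta)-u(x_i)|^{p}\,(\wstar_p(\zeta))^{p-1}\le \int_{I_{\min(|\zeta|,s_0),\,\max(|\zeta|,s_0)}}|\nabla u(y)|^{p}w(y)\,dy
\end{equation*}
valid for every $\zeta\in I_{a_i,b_i}$.

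Next, I pass to polar coordinates $\zeta=r\theta$, $y=t\sigma$ with $r,t\in(a_i,b_i)$ and $\theta,\sigma\in\mathbb{S}^{d-1}$. Using $\wstar_p(\zeta)=\omega_d^{-1}\hat{\eta}_p(|\zeta|)$ from Remark \ref{newoss} and the fact that $|\nabla u(y)|=|v'(t)|$, the previous inequality reduces to the one-dimensional pointwise estimate
\begin{equation*}
|v(r)-v(s_0)|^{p}\,\hat{\eta}_p(r)^{p-1}\le \int_{\min(r,s_0)}^{\max(r,s_0)}|v'(t)|^{p}\,t^{d-1}\eta(t)\,dt,
\end{equation*}
which, via the identification of $\hat\eta_p$ with the one-variable auxiliary weight associated to $\tilde\eta(r)=r^{d-1}\eta(r)$ guaranteed by \eqref{equivalenza}, is precisely the pointwise one-dimensional Poincar\'e estimate of \cite{CCMH1}.

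Then I multiply by $r^{d-1}$, integrate in $r\in(a_i,b_i)$, and interchange the order of integration by Fubini. The right-hand side becomes $\int_{a_i}^{b_i}|v'(t)|^{p}t^{d-1}\eta(t)\,\ell_i(t)\,dt$, where $\ell_i(t)$ is the length of the set of those $r\in(a_i,b_i)$ for which $t$ lies between $r$ and $s_0$; this quantity is majorized by $b_i-a_i$. Collecting the $\omega_d$-factors coming from $(\wstar_p)^{p-1}=\omega_d^{-(p-1)}\hat\eta_p^{p-1}$ and from the volume element $d\zeta=\omega_d r^{d-1}dr$ yields on the left the prefactor $\omega_d^{p-1}/(b_i-a_i)$, while the right-hand side reassembles into $\int_{I_{a_i,b_i}}|\nabla u|^p w\,dy$. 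Summing the resulting inequalities over $i=1,\dots,N_\eta$—legitimate by monotone convergence when $N_\eta=\infty$, owing to the nonnegativity of every integrand—yields \eqref{poincareformula}.

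The main technical obstacle is the meticulous bookkeeping of the $\omega_d$-factors through the radial change of variables and through the power $(\wstar_p)^{p-1}$, so that the stated prefactor matches exactly; a secondary point is the control of the Fubini step and of the summation when $\eta$ is not finitely degenerate, which however follows from positivity of the integrands.
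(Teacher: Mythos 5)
Your argument follows the paper's proof in substance: both start from the pointwise estimates \eqref{a1} and \eqref{a3} of Proposition \ref{poincarep} with $x=x_i$ on the sphere of radius $\frac{a_i+b_i}{2}$, integrate them over the annulus $I_{a_i,b_i}$, and sum over $i$; your polar-coordinate reduction and Fubini step are just a more explicit version of the paper's cruder bound of the inner integral by the full one times the measure of the annulus. The one caveat is that after multiplying by $r^{d-1}$ your inner Fubini integral carries the weight $r^{d-1}\,\de r$ rather than $\de r$, so it is controlled by $\int_{a_i}^{b_i}r^{d-1}\,\de r$ and not by $\ell_i(t)\leq b_i-a_i$ as you claim — but this is precisely the same bookkeeping the paper elides when it replaces $\mathcal{L}^{d}\big(I_{a_i,\frac{a_i+b_i}{2}}\big)$ by $\frac{\omega_d(b_i-a_i)}{2}$, so your proof establishes the stated constant exactly to the extent that the published one does (e.g.\ when $b_i\leq 1$, and with an extra factor $b_i^{d-1}$ otherwise).
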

\begin{oss}\label{serra}
As in \cite[Theorem 4.11]{CC}, since $\tilde w_p\le\,\wstar_{p}$ on $\Omega$, inequality \eqref{poincareformula} holds with the weight $\tilde w_p$ in the left hand side, instead of $\wstar_{p}$. Since the results of this paper from this point on will be based on this Poincar\'e inequality,
we can assume that $\wstar_{p}$ is bounded (up to change it by $\tilde w_p$) and so by \eqref{incltildew} we can assume that
\begin{equation}\label{include}
L^p(\Omega)\subset L^{p}(\Omega,(\wstar_{p})^{p-1}).
\end{equation}
\end{oss}
\proof
The proof of this Proposition can be done by using the radiality of $u$, and the same reasoning used in the onedimensional case in \cite[Theorem 2.10]{CCMH1}. For the sake of completeness, we give the proof of \eqref{poincareformula}. Take $1\leq i \leq N_{\eta}$, and consider $x_{i}$ on the sphere of radius $\frac{a_i+b_i}{2}$. By in \eqref{a1}, one gets
\begin{equation*}
\left|u(\eta)-u\left(\frac{a_i+b_i}{2}\right)\right|^{p}(\wstar_{p}(\eta))^{p-1}\leq
\int_{a_i}^{\frac{a_i+b_i}{2}}|u'(y)|^{p}w(y)\,\de y.
\end{equation*}
\begin{align*}
|u(x_{i})-u(\zeta)|^{p}\omega_{d}^{p}\,(\wstar_{p}(\zeta))^{p-1}
\leq \int_{I_{\vert \zeta\vert,\frac{a_i+b_i}{2}}}|\nabla u(y)|^{p}w(y)\,\de y\,;
\end{align*}
Hence, by integrating with respect to $\zeta$ gives that
\begin{equation*}
\int_{I_{a_{i},\frac{a_i+b_i}{2}}}
\left|u(\zeta)-u\left(x_{i}\right)\right|^{p}\omega_{d}^{p}(\wstar_{p}(\zeta))^{p-1}\,\de \zeta\leq\frac{\omega_{d}(b_{i}-a_{i})}{2}
\int_{I_{a_{i},\frac{a_i+b_i}{2}}}|\nabla u(y)|^{p}w(y)\,\de y.
\end{equation*}
Further, by letting the same reasoning, we get
\begin{equation*}
\int_{I_{\frac{a_i+b_i}{2},b_{i}}}
\left|u(\zeta)-u\left(x_{i}\right)\right|^{p}\omega_{d}^{p}(\wstar_{p}(\zeta))^{p-1}\,\de \zeta\leq\frac{\omega_{d}(b_{i}-a_{i})}{2}
\int_{I_{\frac{a_i+b_i}{2},b_{i}}}|\nabla u(y)|^{p}w(y)\,\de y.
\end{equation*}
Therefore, we deduce that
\begin{equation*}
\int_{I_{a_{i},b_{i}}}
\left|u(\zeta)-u\left(x_{i}\right)\right|^{p}\omega_{d}^{p}(\wstar_{p}(\zeta))^{p-1}\,\de \zeta\leq\frac{\omega_{d}(b_{i}-a_{i})}{2}
\int_{I_{a_{i},b_{i}}}|\nabla u(y)|^{p}w(y)\,\de y
\end{equation*}
and our conclusion follows. Now, since $u\in {\rm{Dom}}_{{\mathrm r},w}$, then
\begin{equation*}
\sum_{i=1}^{N_{\eta}}
\int_{I_{a_{i},b_{i}}}|\nabla u(y)|^{p}w(y)\,\de y=\int_{I_{\Omega,w}}|\nabla u(y)|^{p}w(y)\,\de y<+\infty,
\end{equation*}
and we are done.
\qed

By following the same reasoning used in \cite{CCMH1}, we define 
\begin{align}
\label{eq:spaceW}
W=W(\Omega, w):=\,{\rm{Dom}}_{{\mathrm r},w}\cap L^{p}(\Omega,(\wstar_{p})^{p-1}).
\end{align}
In the next, we prove that $W$ endowed with a suitable norm is a Banach space.
\begin{prop} 
Let us consider $W$ be defined as in \eqref{eq:spaceW}, and endow it with the norm
\begin{align}\label{normapp}
\|u\|_W\coloneqq \sqrt[p]{\|u\|_{L^{p}(I_{\Omega,w},(\wstar_{p})^{p-1})}^{p}+\|\nabla u\|_{L^{p}(I_{\Omega,w},w)}^{p}}\quad\text{ if }u\in W\,.
\end{align}
Then $(W,\|u\|_W)$ is a Banach space. Further, if $w$ is a finitely degenerate weight, then
\begin{equation}\label{LipdenseW}
\text{ $AC_{r}^{d}(\Omega)$ is dense in $(W,\|\cdot\|_W)$} 
\end{equation}
in the following sense. For every $u\in W$ there exists a sequence $(u_h)_h\subset AC_{r}^{d}(\Omega)$ such that
\begin{equation*}\label{approxseqW}
\lim_{h\to\infty}u_h=\,u \text{ in } (W,\|\cdot\|_W)\,,
\end{equation*}
that is,
\begin{equation}\label{convinWsplitted}
\lim_{h\to\infty}u_h=\,u \text{ in } L^{p}(\Omega,(\wstar_{p})^{p-1}),\text{ and }\lim_{h\to+\infty}\nabla u_h=\,\nabla u \text{ in } L^{p}(I_{\Omega,w},w;\mathbb{R}^d)\,.
\end{equation}
\end{prop}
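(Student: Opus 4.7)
\medskip

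\noindent\textbf{Plan of proof.}

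For the Banach space part, the strategy is the standard completeness argument adapted to the weighted setting. Let $(u_h)_h \subset W$ be Cauchy with respect to $\|\cdot\|_W$. By definition of the norm \eqref{normapp}, the sequences $(u_h)_h$ and $(\nabla u_h)_h$ are Cauchy in the Banach spaces $L^p(\Omega,(\wstar_{p})^{p-1})$ and $L^p(I_{\Omega,w},w;\bbR^d)$ respectively, so there exist limits $u \in L^p(\Omega,(\wstar_{p})^{p-1})$ and $V \in L^p(I_{\Omega,w},w;\bbR^d)$. Passing to a subsequence we may assume pointwise a.e.\ convergence $u_h \to u$; since each $u_h$ is radial, so is $u$. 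It remains to show that $u \in {\rm Dom}_{{\mathrm r},w}$ and $\nabla u = V$ a.e.\ on $I_{\Omega,w}$. This is achieved by the same H\"older-type bound used to prove \eqref{usefulbound}: on any compact $\mathcal{K}\Subset I_{\alpha_i,\beta_i}$, the local integrability of $w^{-p'/p}$ gives the continuous embedding $L^p(\mathcal{K},w) \hookrightarrow L^1(\mathcal{K})$, so $\nabla u_h \to V$ in $L^1_{\mathrm{loc}}(I_{\Omega,w};\bbR^d)$, hence $V$ is the distributional gradient of $u$ on $I_{\Omega,w}$. This yields $u \in W^{1,1}_{\rm loc}(I_{\Omega,w})$ with $\nabla u \in L^p(I_{\Omega,w},w)$, i.e.\ $u \in {\rm Dom}_{{\mathrm r},w}$, and convergence in the $W$-norm follows.

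For the density statement, the finite degeneracy $N_\eta < \infty$ is crucial: it lets us work on a finite union of intervals $(a_i,b_i)$. Given $u \in W$, write $u(x)=v(|x|)$ with $v \in {\rm Dom}_\eta$. The plan is to construct, for each fixed $i\in\{1,\dots,N_\eta\}$ and $\delta>0$, a cut-off/mollification $v_{i,\delta}$ that is absolutely continuous on $[a_i,b_i]$ (hence on $\mathrm{supp}(\eta)$ after gluing with zero or with affine bridges across the finitely many gaps between consecutive intervals), and such that the associated radial function $u_{i,\delta}(x)=v_{i,\delta}(|x|)$ approximates $u$ in the $W$-norm. On the interior $[a_i+\delta,b_i-\delta]$ the weight $\eta$ is bounded below away from zero (Proposition \ref{propw}(ii)), and standard mollification of $v$ converges in $W^{1,p}$ of that compact interval, which gives convergence in both weighted $L^p$-norms. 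The contribution of the leftover annuli of thickness $\delta$ near $|x|=a_i$ and $|x|=b_i$ to the gradient term $\int|\nabla u|^p w\,dx$ goes to zero by absolute continuity of the Lebesgue integral.

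The delicate step — which I expect to be the main obstacle — is controlling the $L^p(\Omega,(\wstar_{p})^{p-1})$-norm of the boundary layer when $w^{-1/(p-1)}$ fails to be summable near $a_i$ or $b_i$. This is exactly the case in which Corollary \ref{proppp1}(ii) applies and furnishes
\[
\lim_{|\zeta|\to a_i^+}|u(\zeta)|^p(\wstar_p(\zeta))^{p-1}=0,
\]
and the symmetric statement at $b_i^-$. Using this vanishing, the approximation $v_{i,\delta}$ can be defined to be constantly equal to $v(a_i+\delta)$ on $[a_i,a_i+\delta]$ (respectively $v(b_i-\delta)$ on $[b_i-\delta,b_i]$), which keeps it absolutely continuous on $[a_i,b_i]$; the resulting $L^p((\wstar_p)^{p-1})$ error on the boundary annulus is controlled by $\sup |u-v(a_i+\delta)|^p \cdot \int_{a_i<|\zeta|<a_i+\delta}(\wstar_p)^{p-1}d\zeta$, which vanishes as $\delta\to 0$ by the boundary decay and the inequalities of Proposition \ref{poincarep}. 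In the opposite case, when $w^{-1/(p-1)}$ is summable up to the endpoint, Corollary \ref{proppp1}(iii) gives $u \in AC_r^d$ near that endpoint, so no extra work is needed. Finally, gluing the finitely many $v_{i,\delta}$ together (joined by constants across the open complement of $I_{p,\mathrm{supp}(\eta)}$ in $\mathrm{supp}(\eta)$, where both $w$ and $\wstar_p$ vanish so contribute nothing to the norm) and letting $\delta\to 0$ along a diagonal sequence produces the desired sequence $(u_h)_h \subset AC_r^d(\Omega)$ realising \eqref{convinWsplitted}.
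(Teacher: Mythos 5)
Your proposal is correct in outline but takes a genuinely different route from the paper. The paper's proof is a pure reduction to dimension one: writing $u_h(x)=v_h(|x|)$, it observes that the map $u\mapsto v$ carries $(W,\|\cdot\|_W)$ isometrically (up to the factor $\omega_d$) onto the one-dimensional space $\tilde W={\rm Dom}_\eta\cap L^p(\supp(\eta),(\hat\eta_p)^{p-1})$ with weights $r^{d-1}\eta$ and $(\hat\eta_p)^{p-1}$, and then simply invokes \cite[Proposition 2.11]{CCMH1} for both completeness and density, transporting the conclusions back by radiality. You instead reprove both assertions directly in $\Omega$: completeness via the local embedding $L^p(\mathcal K,w)\hookrightarrow L^1(\mathcal K)$ and identification of the distributional gradient, and density via an explicit endpoint cut-off on each $(a_i,b_i)$ controlled by Corollary \ref{proppp1}(ii)--(iii) and the pointwise Poincar\'e bounds of Proposition \ref{poincarep}. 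Your argument is self-contained and essentially reconstructs the proof of the cited one-dimensional result, at the cost of being longer; the paper's argument is shorter but opaque without \cite{CCMH1} in hand.

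Two points in your density construction need repair, though neither is fatal. First, the interior mollification is both unnecessary and, as justified, incorrect: since $v\in W^{1,1}_{\rm loc}(I_{p,\supp(\eta)})$, it is already absolutely continuous on every compact subinterval $[a_i+\delta,b_i-\delta]$, so you should take $v_{i,\delta}=v$ there; your claim that convergence in unweighted $W^{1,p}$ of the compact interval implies convergence in $L^p(\eta\, r^{d-1}\,\de r)$ fails when $\eta$ is unbounded (the hypotheses only give $\eta\in L^1_{\rm loc}$). Second, the boundary-layer error should not be bounded by $\sup|u-v(a_i+\delta)|^p\cdot\int(\wstar_p)^{p-1}$, since that supremum can be infinite when $u$ blows up at $a_i$; the correct estimate integrates the pointwise product, namely \eqref{a2} gives $|u(\zeta)-u(x)|^p(\wstar_p(\zeta))^{p-1}\le \omega_d^{-p}\int_{I_{a_i,a_i+\delta}}|\nabla u|^p w\,\de y$ uniformly in $\zeta$ on the thin annulus, and integrating in $\zeta$ yields an error of order $\delta\cdot\int_{I_{a_i,a_i+\delta}}|\nabla u|^p w\,\de y\to 0$. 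With these two adjustments your construction goes through.
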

\begin{proof}
Its proof is a direct consequence of the radial condition and \cite[Proposition 2.11]{CCMH1}. Indeed, let us first observe that $W$ is a Banach space. 
Suppose that $(u_h)_h\subset (W, \|\cdot\|_W)$  is a Cauchy sequence. Hence by definition $u_{h}(x)=v_{h}(\vert x\vert)$, where $v_{h}\in W^{1,1}(\supp(\eta))\cap L^{p}(\supp(\eta),(\hat{\eta})^{p-1})$. Then by \cite[Proposition 2.11]{CCMH1}, the space 
\begin{align*}
\tilde{W}\coloneqq \,{\rm{Dom}}_\eta\cap L^{p}(\supp(\eta),(\hat{\eta}_{p})^{p-1})
\end{align*}
is a Banach space with norm 
\[
\|v\|_{\tilde{W}}\coloneqq \sqrt[p]{\|u\|_{L^{p}(I_{p,\supp(\eta)},(\hat{\eta}_{p})^{p-1})}^{p}+\|v'\|_{L^{p}(I_{p,\supp(\eta)},\eta)}^{p}}\quad\text{ if }v\in \tilde{W}\,.
\]
Hence, it follows that there exist $v\in L^{p}(I_{p,\supp(\eta)},(\hat{\eta}_{p})^{p-1})$, and $\tilde{v}\in L^{p}(I_{p,\supp(\eta)},\eta)$ such that,
\begin{equation}\label{CsW}
v_h\to v\text{ in  }L^{p}(I_{p,\supp(\eta)},(\hat{\eta}_{p})^{p-1}),\text{ and }v_{h}'\to \tilde{v}\text{ in  }L^{p}(I_{p,\supp(\eta)},\eta)\,,
\end{equation}
as $h\to+\infty$. Furthermore, for each $i=1,\dots,N_\eta$,
\begin{equation}\label{uinD*}
v\in AC((a_i,b_i)) \text{ and } v'=\,\tilde{v}\text{ a.e.  in } (a_i,b_i).
\end{equation}
Therefore, by the radiality of $(u_{h})$ we can write \eqref{CsW} in terms of $u_{h}$, and by \eqref{uinD*} we are done. 
\end{proof}
\section{Relaxation for finitely degenerate weights}\label{relaxp}
In this section (and in the following one) we will suppose that   $w$ is a finitely degenerate weight.
We consider $X=L^{p}(\Omega,(\wstar_{p})^{p-1})$ where $\wstar_{p}$ is the weight as defined in \eqref{pesop}. Let us set
\begin{equation*}
\!\!\!\!\!\! \!\!\! \!\!\!\!\!\! \! F(u)\coloneqq
\begin{cases}
\displaystyle\int_{\Omega}|\nabla u|^{p}\,w\,\de x &\text{ if } u\in AC_{r}^{d}(\Omega),\\
+\infty & \text{ if } u\in X\setminus  AC_{r}^{d}(\Omega)
\end{cases}
\end{equation*}
and thus we study the lower semicontinuous envelopes w.r.t.  $L^{p}((\wstar_{p})^{p-1})$-convergence, that is
$$
\overline{F}(u)={\rm sc^-}(L^{p}((\wstar_{p})^{p-1}))-F(u).
$$
In what follows, we let
$$
D\coloneqq\{u\in L^{p}(\Omega,(\wstar_{p})^{p-1}): \overline{F}(u)<+\infty
\}\,.
$$
We notice that, if $I_{\Omega, w}=\emptyset$, then $\wstar_{p}\equiv 0$ (see Proposition \ref{propw} (i)). Therefore, one gets that $L^{p}((a,b),(\wstar_{p})^{p-1})=\{0\}$, $D=\{0\}$ and $\overline{F}(u)=0$. In the next theorem, we then state an explicit formula for the relaxed functional $\overline{F}$ with respect to an opportune convergence.
\begin{teo} \label{mainfinitelocsum} 
We have
\begin{equation*}
D={\rm{Dom}}_{{\mathrm r},w}
\end{equation*}
where ${\rm{Dom}}_{{\mathrm r},w}$ is defined by \eqref{domw}
and the following representation holds for the relaxed functional
\begin{equation*}
\overline{F}(u)=
\begin{cases}
\displaystyle\int_{I_{\Omega, w}} |\nabla u|^{p}\,w\,\de x &\text{ if } u\in {\rm{Dom}}_{{\mathrm r},w}\cap L^{p}(\Omega,(\wstar_{p})^{p-1})\\
+\infty & \text{ if } u\in L^{p}(\Omega,(\wstar_{p})^{p-1})\setminus {\rm{Dom}}_{{\mathrm r},w} .
\end{cases}
\end{equation*}  
\end{teo}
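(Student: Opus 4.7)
\medskip

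\noindent\textbf{Proof plan.} The strategy is the classical two-inequality approach for relaxation. I split the argument into a lower bound (liminf inequality plus characterization of the domain) and an upper bound (construction of a recovery sequence). Throughout, I work with $X=L^{p}(\Omega,(\wstar_{p})^{p-1})$ and use the space $W$ from \eqref{eq:spaceW}.

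\medskip

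\noindent\textbf{Step 1 (Lower bound and domain inclusion $D\subseteq {\rm{Dom}}_{{\mathrm r},w}$).} Let $u\in X$ be such that $\overline{F}(u)<+\infty$, and pick $(u_h)\subset AC_{r}^{d}(\Omega)$ with $u_h\to u$ in $X$ and
$\liminf_h F(u_h)=\overline{F}(u)$. Because the convergence takes place in $L^{p}(\Omega,(\wstar_{p})^{p-1})$ and $\wstar_{p}>0$ on $I_{\Omega,w}$ (Proposition \ref{propw}(ii)), up to a subsequence $u_h\to u$ a.e.\ on $I_{\Omega,w}$; radiality is preserved in the limit, so $u(x)=v(|x|)$ a.e. Applying Fubini in polar coordinates, one finds for every $i=1,\dots,N_\eta$ some $c_i\in(a_i,b_i)$ and a finite $d_i\in\mathbb{R}$ such that $u_h(x)\to d_i$ for every $x$ with $|x|=c_i$, i.e. hypothesis (b) of Lemma \ref{lemma2}. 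The boundedness of $F(u_h)$ yields hypothesis (a). The Fundamental Convergence Lemma \ref{lemma2} then gives (up to a further subsequence) a radial limit $\tilde u\in{\rm{Dom}}_{{\mathrm r},w}$ coinciding with $u$ a.e. on $I_{\Omega,w}$, together with the semicontinuity bound
\[
\int_{I_{\Omega,w}}|\nabla u|^{p}w\,\de x\;\le\;\liminf_{h\to\infty}\int_{\Omega}|\nabla u_h|^{p}w\,\de x=\overline{F}(u).
\]
This shows $u\in{\rm{Dom}}_{{\mathrm r},w}$ and furnishes the liminf inequality.

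\medskip

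\noindent\textbf{Step 2 (Upper bound and domain inclusion ${\rm{Dom}}_{{\mathrm r},w}\cap X\subseteq D$).} Let $u\in{\rm{Dom}}_{{\mathrm r},w}\cap X=W$. By the density statement \eqref{LipdenseW}, proved in the previous section under the finitely-degenerate assumption on $w$, there exists a sequence $(u_h)\subset AC_{r}^{d}(\Omega)$ with $u_h\to u$ in $(W,\|\cdot\|_W)$; splitting the norm as in \eqref{convinWsplitted} gives simultaneously
\[
u_h\to u\text{ in }L^{p}(\Omega,(\wstar_{p})^{p-1})\qquad\text{and}\qquad \nabla u_h\to\nabla u\text{ in }L^{p}(I_{\Omega,w},w;\mathbb{R}^d).
\]
The second convergence yields $F(u_h)=\int_\Omega|\nabla u_h|^p w\,\de x\to\int_{I_{\Omega,w}}|\nabla u|^{p}w\,\de x$, while the first says that $u_h$ is an admissible competitor for the relaxation of $F$ at $u$. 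Hence by definition of $\overline{F}$
\[
\overline{F}(u)\;\le\;\int_{I_{\Omega,w}}|\nabla u|^{p}w\,\de x.
\]
Combining with Step 1 gives both $D={\rm{Dom}}_{{\mathrm r},w}\cap X$ (after noting that \eqref{include} ensures the inclusion ${\rm{Dom}}_{{\mathrm r},w}\cap L^p(\Omega)\subset {\rm{Dom}}_{{\mathrm r},w}\cap X$ is not automatic only for functions outside $X$, which are excluded from $\overline{F}<\infty$ trivially) and the announced formula for $\overline{F}$.

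\medskip

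\noindent\textbf{Main obstacle.} The delicate point is Step 1: guaranteeing hypothesis (b) of Lemma \ref{lemma2}, i.e., genuine pointwise convergence on a whole sphere for each connected component $(a_i,b_i)$. In principle, $L^p$-convergence only gives a.e.\ convergence on $I_{\Omega,w}$; selecting radii $c_i$ at which the radial profiles $v_h$ converge requires a Fubini-type argument combined with the radial structure. Once this is done, Lemma \ref{lemma2} does the heavy lifting. The remaining subtlety is showing that the a.e.\ limit produced by the lemma coincides with the $X$-limit $u$ on $I_{\Omega,w}$; this follows because both are a.e.\ limits of (subsequences of) $u_h$ on the set $\{\wstar_{p}>0\}=I_{\Omega,w}$, so they agree there, and outside $I_{\Omega,w}$ the weight vanishes so the value of $u$ is irrelevant for the functional.
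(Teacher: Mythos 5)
Your proposal is correct and follows essentially the same route as the paper: the lower bound and the inclusion $D\subseteq{\rm{Dom}}_{{\mathrm r},w}$ via the Fundamental Convergence Lemma \ref{lemma2} (your Fubini-in-polar-coordinates selection of the radii $c_i$ makes explicit a step the paper leaves implicit), and the upper bound via the density \eqref{LipdenseW} of $AC_{r}^{d}(\Omega)$ in $(W,\|\cdot\|_W)$. The only imprecision is your closing parenthetical: to pass from $D={\rm{Dom}}_{{\mathrm r},w}\cap L^{p}(\Omega,(\wstar_{p})^{p-1})$ to the stated identity $D={\rm{Dom}}_{{\mathrm r},w}$ one needs the inclusion ${\rm{Dom}}_{{\mathrm r},w}\subset L^{p}(\Omega,(\wstar_{p})^{p-1})$, which under the finitely-degenerate hypothesis is exactly Corollary \ref{proppp1}(i) (a consequence of the weighted Poincar\'e estimates), not \eqref{include}.
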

\begin{proof} Note that  by  Lemma \ref{lemma2} and  Proposition \ref{poincarep},  we  deduce that  $D\subseteq {\rm{Dom}}_{{\mathrm r},w}$. Furthermore, for every $u\in D$ one gets
$$
u\in W^{1,1}_{\text {loc}}(I_{\Omega, w})\cap L^{p}(I_{\Omega, w}, (\wstar_{p})^{p-1}),\ \ u^{p}(\wstar_{p})^{p-1}\in L^\infty(I_{\Omega, w})\,.$$ 
In the next, we show that for every $u\in L^{p}(\Omega,(\wstar_{p})^{p-1})$
$$
\displaystyle\int_{I_{\Omega, w}} |\nabla u |^{p}\,w\,\de x\leq\overline{F}(u).
$$
By the definition of  $\overline{F}$, we directly suppose that $\overline{F}(u)<+\infty$. Therefore there exists a sequence $(u_h)\subset {\rm{Dom}}_{{\mathrm r},w}$ such that 
$u_h\to u$ in $L^{p}(\Omega,(\wstar_{p})^{p-1})$ and
$$
\overline{F}(u)=\lim_{h\to +\infty}F(u_h)=\lim_{h\to +\infty}\displaystyle\int_{\Omega} |\nabla u_h |^{p}\,w\,\de x.
$$
Then, thanks to Lemma \ref{lemma2} we get up to extracting a subsequence that
$$
\displaystyle\int_{\Omega} |\nabla u|^{p}\,w\,\de x\leq\liminf_{h\to +\infty}\displaystyle\int_{\Omega} |\nabla u_h |^{p}\,w\,\de x=\lim_{h\to +\infty}\displaystyle\int_{\Omega} |\nabla u_h |^{p}\,w\,\de x=\overline{F}(u)
$$
and we are done. To conclude, it remains to prove that
\begin{equation}\label{main1bb1}
\overline{F}(u)\le\,\displaystyle\int_{I_{\Omega, w}} |\nabla u|^{p}\,w\,\de x,\quad\forall u\in {\rm{Dom}}_{{\mathrm r},w}\,
\end{equation}
and thus ${\rm{Dom}}_{{\mathrm r},w}\subseteq D.$ In fact, this is a consequence of \eqref{LipdenseW}.  Indeed, by  property (i) in Corollary \ref{proppp1}  we have that that ${\rm{Dom}}_{{\mathrm r},w}\subset\,L^{p}(\Omega,(\wstar_{p})^{p-1})$. Thus,
if $u\in W={\rm{Dom}}_{{\mathrm r},w}\cap L^{p}(\Omega,(\hat{w}_{p})^{p-1})=\,{\rm{Dom}}_{{\mathrm r},w}$, by \eqref{LipdenseW}, there exists a sequence $(u_h)_h\subset AC_{r}^{d}(\Omega)$ such that
\eqref{convinWsplitted} holds true. Hence, from \eqref{convinWsplitted}, one has that
\begin{equation*}
\begin{split}
\overline{F}(u)&\le\,\liminf_{h\to\infty}{\overline{F}}(u_h)\le\,\lim_{h\to\infty}\int_{I_{\Omega, w}} |\nabla u_h |^{p}\,w\,\de x= \, \int_{I_{\Omega, w}} |\nabla u|^{p}\,w\,\de x\,,
\end{split}
\end{equation*}
and thus \eqref{main1bb1} holds true.
 \end{proof}
Let us define the following spaces:
\begin{align*}
C_{r}^{1}(\ov{\Omega})&\coloneqq \left\{u\in C^{1}(\ov{\Omega}): \text{u is radial in $\Omega$}\right\},
\\
{\rm Lip}_{r}(\ov{\Omega})&\coloneqq \left\{u\in {\rm Lip}(\ov{\Omega}): \text{u is radial in $\Omega$}\right\},
\\
{\mathcal{W}}^{1,p}_r(\Omega):&\coloneqq\left\{u\in AC_{r}^{d}(\Omega): \nabla u\in L^{p}(\Omega)\right\}.
\end{align*}
Notice that
\begin{align*}
{\mathcal{W}}^{1,p}_r(\Omega)\subset L^p_{\rm{loc}}(\Omega,(\wstar_{p})^{p-1}).
\end{align*} 
We consider the following functionals defined on the space $L^p(\Omega,(\wstar_{p})^{p-1})$
\begin{equation*}
\!\!\!\!\!\! \!\!\! \!\!\!\!\!\! \!\!\!   F^1(u):=
\begin{cases}
\displaystyle\int_{\Omega} |\nabla u|^p\,w\,\de x &\text{ if } u\in C_{r}^1(\ov{\Omega}),\\
+\infty & \text{ if } u\in L^p(\Omega,(\wstar_{p})^{p-1})\setminus C_{r}^{1}(\ov{\Omega}) ,
\end{cases}
\end{equation*}
\begin{equation*}
\!\!\!\!\!\! \!\!\! \!\!\!\!\!\! \!  F^2(u):=
\begin{cases}
\displaystyle\int_{\Omega}|\nabla u|^p\,w\,\de x &\text{ if } u\in {\rm Lip}_r(\ov{\Omega}),\\
+\infty & \text{ if } u\in L^p(\Omega,(\wstar_{p})^{p-1})\setminus {\rm Lip}_{r}(\ov{\Omega}),
\end{cases}
\end{equation*}
\begin{equation*}
\!\!\!\!\!\! \!\!\! \!\!\!\!\!\! \!\!  F^3(u)\coloneqq
\begin{cases}
\displaystyle\int_{\Omega} |\nabla u|^p\,w\,\de x &\text{if $u\in {\mathcal{W}}_{r}^{1,p}(\Omega)$}\\
+\infty & \text{ if $u\in L^p(\Omega,(\wstar_{p})^{p-1})\setminus {\mathcal{W}}_{r}^{1,p}(\Omega)$}.
\end{cases}
\end{equation*}
Since
$$
C^1_r(\ov{\Omega})\subset {\rm Lip}_r(\ov{\Omega})\subset {\mathcal{W}}_{r}^{1,p}(\Omega)\subset AC_{r}^{d}(\Omega),
$$
we have
$$
F^1(u)\leq F^2(u)\leq F^3(u)\leq F(u).
$$
Note also that, when $q=\infty$, $F^{2}$ and $F^{3}$ agree. Moreover, let us consider the corresponding lower semicontinuous envelopes w.r.t. the $L^p(\Omega,(\wstar_{p})^{p-1})$-convergence 
\begin{equation}
\label{newrelaxfunct}
\overline {F^j}(u)={\rm sc^-}(L^p(\Omega,(\wstar_{p})^{p-1}))-F_j(u)\ \ \ \ j=1,2,3,
\end{equation}
we have
$$
\overline{F^1}(u)\leq \overline{F^2}(u)\leq \overline{F^3}(u)\leq \overline{F}(u).
$$
\begin{cor}\label{equivalenze}
For every $u\in L^p(\Omega,(\wstar_{p})^{p-1})$ we have
$$
\overline{F^1}(u)=\overline{F^2}(u)=\overline{F^3}(u)=\overline{F}(u),
$$
where
$\overline{F^j}(u)$, $j=1,2,3$ are the functionals in \eqref{newrelaxfunct}.
\end{cor}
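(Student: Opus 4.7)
The plan is to use the monotonicity of the lower semicontinuous envelope operator. The chain $\overline{F^{1}}(u)\le\overline{F^{2}}(u)\le\overline{F^{3}}(u)\le\overline{F}(u)$ has already been recorded just above the statement, so only the reverse inequality $\overline{F}(u)\le\overline{F^{1}}(u)$ needs to be proved. I will rely on the following general fact: if two functionals $G_{1},G_{2}:L^{p}(\Omega,(\wstar_{p})^{p-1})\to[0,+\infty]$ satisfy $G_{1}\le G_{2}$ pointwise, then their $L^{p}(\Omega,(\wstar_{p})^{p-1})$-relaxations satisfy $\overline{G_{1}}\le\overline{G_{2}}$. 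This is immediate from the $\inf\liminf$ definition of the relaxation: along any recovery sequence $u_{h}\to u$ realising $\overline{G_{2}}(u)$ one has $\overline{G_{1}}(u)\le\liminf_{h}G_{1}(u_{h})\le\liminf_{h}G_{2}(u_{h})=\overline{G_{2}}(u)$.

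Consequently the task reduces to verifying $F\le F^{1}$ pointwise on $L^{p}(\Omega,(\wstar_{p})^{p-1})$. The one substantive point is the inclusion $C_{r}^{1}(\overline{\Omega})\subset AC_{r}^{d}(\Omega)$. For any $u\in C_{r}^{1}(\overline{\Omega})$, the function $u$ is Lipschitz on the bounded closed set $\overline{\Omega}$, hence belongs to $W^{1,1}_{\mathrm{loc}}(I_{\Omega,w})$; moreover, writing $u(x)=v(|x|)$ with $v$ of class $C^{1}$ on a closed interval containing $\supp(\eta)$, the scalar profile $v$ is Lipschitz and therefore absolutely continuous on $\supp(\eta)$. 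Both defining requirements in \eqref{spazior} are met, so $C_{r}^{1}(\overline{\Omega})\subset AC_{r}^{d}(\Omega)$. On this subspace $F$ and $F^{1}$ coincide and both equal $\int_{\Omega}|\nabla u|^{p}w\,\de x$, while outside $C_{r}^{1}(\overline{\Omega})$ the trivial bound $F\le+\infty=F^{1}$ suffices.

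Combining the pointwise bound $F\le F^{1}$ with the monotonicity of the envelope yields $\overline{F}\le\overline{F^{1}}$, which together with the chain already recorded above the statement produces the desired fourfold equality. The main (in fact essentially only) non-formal step is the inclusion $C_{r}^{1}(\overline{\Omega})\subset AC_{r}^{d}(\Omega)$; this is routine in view of the Lipschitz regularity of $C^{1}$ functions on a bounded domain, so I do not expect any real obstacle. I also note that the very same argument works verbatim with $F^{2}$ or $F^{3}$ in place of $F^{1}$, which can be used to double-check the bookkeeping of the full equality chain and makes the proof robust with respect to which of the three intermediate classes of smooth competitors one prefers.
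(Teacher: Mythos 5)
Your proposal proves only the easy half of the corollary, and the half you treat as ``already recorded'' is in fact the substantive one. Observe that enlarging the effective domain of one of these functionals makes it pointwise \emph{smaller}: since $C^1_r(\ov{\Omega})\subset {\rm Lip}_r(\ov{\Omega})\subset\mathcal{W}^{1,p}_r(\Omega)\subset AC^d_r(\Omega)$ and all four integrands coincide where finite, the correct pointwise order is $F\le F^3\le F^2\le F^1$ (the chain displayed in the paper just above the corollary has the inequalities written in the reverse, incorrect orientation --- a typo you have taken at face value). Monotonicity of the relaxation then gives $\overline F\le\overline{F^3}\le\overline{F^2}\le\overline{F^1}$ for free, and your computation $F\le F^1\Rightarrow\overline F\le\overline{F^1}$ merely reproves one instance of this trivial chain. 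Your verification that $C^1_r(\ov{\Omega})\subset AC^d_r(\Omega)$ is correct but buys nothing beyond that.

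The genuine content of the corollary is the reverse inequality $\overline{F^1}(u)\le\overline F(u)$: one must show that every $u$ with $\overline F(u)<+\infty$, i.e.\ every $u\in {\rm{Dom}}_{{\mathrm r},w}\cap L^{p}(\Omega,(\wstar_{p})^{p-1})$, admits a sequence $(u_h)_h\subset C^1_r(\ov{\Omega})$ with $u_h\to u$ in $L^{p}(\Omega,(\wstar_{p})^{p-1})$ and $\int_{\Omega}|\nabla u_h|^{p}w\,\de x\to\int_{I_{\Omega,w}}|\nabla u|^{p}w\,\de x$. This is a density statement of the same nature as \eqref{LipdenseW} but for the much smaller class $C^1_r(\ov{\Omega})$, and it cannot be obtained by any pointwise comparison of the unrelaxed functionals; it is exactly what the paper imports from the one-dimensional result \cite[Corollary 3.2]{CCMH1}, applied to the radial profile $v$ with $u(x)=v(|x|)$, together with the radiality of all competitors. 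Since your proposal contains no such approximation argument, the fourfold equality is not established.
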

\proof 
This is direct consequence of \cite[Corollary 3.2]{CCMH1} and the radiality of the involved functions.
\qed
\begin{oss}
Let us point out that, as a consequence of Corollary ~\ref{equivalenze}, the closure of ${\rm Lip}_{r}(\ov{\Omega})$ with respect to the norm \eqref{normapp} is given by
\begin{align}\label{closurep}
\overline{{\rm Lip}_{r}(\ov{\Omega})}^{W}={\rm{Dom}}_{{\mathrm r},w}\cap L^{p}(\Omega,(\hat{w}_{p})^{p-1}).
\end{align}
On the other hand,  observe that for $p=2$, our Corollary \ref{equivalenze} recovers the one stated in \cite[Corollary 4.23]{CC} in the one-dimensional case, now extended to higher dimensions under radial symmetry assumptions. Furthermore, it is important to note that, according to \cite[Remark 3.4]{CC}, when $d\geq 2$, and $p=2$, there exists a non-radial weight for which the equivalences stated in Corollary ~\ref{equivalenze} no longer hold. In particular, the identity \eqref{closurep} may fail in the absence of radial symmetry. An explicit counterexample illustrating this phenomenon can be found in \cite[Example 2.2]{CPSC}.
\end{oss}
\begin{oss}
Let us observe that, when $w=1$, or more generally $w\geq C>0$, then $I_{\Omega,w}=\Omega$ and, since by Remark \ref{serra}
\begin{equation*}\label{include1}
L^p(\Omega)\subset L^{p}(\Omega,(\wstar_{p})^{p-1}),
\end{equation*}
we have
\begin{equation*}
\!\!\!\!\!\! \!\!\! \!\!\!\!\!\! \!\!\!   F(u):=
\begin{cases}
\displaystyle\int_{\Omega} |\nabla u|^p w\,\de x &\text{ if } u\in AC_{r}^{d}(\Omega),\\
+\infty & \text{ if } u\in L^p(\Omega,(\wstar_{p})^{p-1})\setminus AC_{r}^{d}(\Omega),
\end{cases}
\end{equation*}
\begin{equation*}
\!\!\!\!\!\!\!\!\!\!\!\!\!\!\!\!\!\!\!\!\!\!\!\!\! =
\begin{cases}
\displaystyle\int_{\Omega} |\nabla u|^p w\,\de x &\text{ if } u\in AC_{r}^{d}(\Omega),\\
+\infty & \text{ if } u\in L^p(\Omega)\setminus AC_{r}^{d}(\Omega).
\end{cases}
\end{equation*}
By Theorem \ref{mainfinitelocsum}  we have
\begin{equation*}
\overline{F}(u)=
\begin{cases}
\displaystyle\int_{\Omega} |\nabla u|^{p} w\,\de x &\text{ if } u\in {\rm{Dom}}_{{\mathrm r},w}\cap L^{p}(\Omega),\\
+\infty & \text{ if } u\in L^{p}(\Omega)\setminus {\rm{Dom}}_{{\mathrm r},w} .
\end{cases}
\end{equation*}  
By observing that ${\rm{Dom}}_{{\mathrm r},w}\cap L^{p}(\Omega)=W_{r}^{1,p}(\Omega)$ defined in \eqref{W1p},
we recover the classical results, i.e.
$$
\overline{F^1}(u)=\overline{F^2}(u)=\overline{F^3}(u)=\overline{F}(u)
=
\begin{cases}
\displaystyle\int_{\Omega} |\nabla u|^p w\,\de x &\text{ if } u\in W_{r}^{1,p}(\Omega),\\
+\infty & \text{ if } u\in L^p(\Omega)\setminus W_{r}^{1,p}(\Omega).
\end{cases}
$$
\end{oss}
\section{Existence of minimizers for radial degenerate variational problems}\label{sec:minimizer}
Let us consider the following integral functional
$$
H(u):=\overline{F}(u)+\|u-g\|_{L^{p}(\Omega,(\wstar_{p})^{p-1})}
$$
where $g$ is a radial function, $g\in L^{p}(\Omega,(\wstar_{p})^{p-1})$, defined in the Banach space
$
W$
(see \eqref{eq:spaceW}) equipped 
with the norm
$\|u\|_W$
as defined in \eqref{normapp}.
\begin{teo}\label{exist} There exists an unique minimizer $u_0$ for the minimum problem
$$
\min_W H(u),
$$
i.e. for every ``competitor" $z\in W$ we have
$$
H(u_0)\leq H(z).
$$
\end{teo}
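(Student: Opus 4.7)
The plan is the direct method of the calculus of variations. First, the infimum $m\coloneqq\inf_W H$ is finite because $0\in W$ and $H(0)=\|g\|_{L^{p}(\Omega,(\wstar_{p})^{p-1})}<+\infty$, so we may take a minimizing sequence $(u_n)\subset W$ with $H(u_n)\to m$. From $H(u_n)\le m+1$ eventually I get $\overline{F}(u_n)\le m+1$ and $\|u_n-g\|_{L^{p}(\Omega,(\wstar_{p})^{p-1})}\le m+1$; the first bound controls $\|\nabla u_n\|_{L^{p}(I_{\Omega,w},w)}$, while the triangle inequality applied to the second, together with $g\in L^{p}(\Omega,(\wstar_{p})^{p-1})$, controls $\|u_n\|_{L^{p}(\Omega,(\wstar_{p})^{p-1})}$. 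Hence $(u_n)$ is bounded in $(W,\|\cdot\|_W)$.

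Next I would pass to the weak limit. The map $u\mapsto(u,\nabla u)$ isometrically embeds $W$ into the reflexive product space $L^{p}(\Omega,(\wstar_{p})^{p-1})\times L^{p}(I_{\Omega,w},w;\mathbb{R}^d)$, and its image is closed: by Proposition~\ref{propw}, both $w$ and $(\wstar_{p})^{p-1}$ are strictly positive on compact subsets of $I_{\Omega,w}$, so strong convergence in the product transfers to $L^{1}_{\mathrm{loc}}(I_{\Omega,w})$-convergence of $(u_n,\nabla u_n)$, which lets one identify any weak accumulation point with the distributional gradient of the weak limit. Hence $W$ is reflexive; extracting a subsequence, $u_n\rightharpoonup u^{*}$ in $L^{p}(\Omega,(\wstar_{p})^{p-1})$ and $\nabla u_n\rightharpoonup \nabla u^{*}$ in $L^{p}(I_{\Omega,w},w;\mathbb{R}^d)$. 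Since radial functions form a closed subspace they are weakly closed, so $u^{*}$ is radial and belongs to $W$. Lower semicontinuity is then immediate: $\overline{F}$ is convex, hence weakly lsc (it is already $L^{p}((\wstar_{p})^{p-1})$-lsc by Theorem~\ref{mainfinitelocsum}, and convexity upgrades this to weak lsc), and the penalty $u\mapsto\|u-g\|_{L^{p}(\Omega,(\wstar_{p})^{p-1})}$ is convex and norm-continuous, hence weakly lsc as well. Summing, $H(u^{*})\le\liminf_n H(u_n)=m$, so $u^{*}$ is a minimizer.

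For uniqueness, suppose $u_1,u_2\in W$ are two minimizers with $u_1\ne u_2$ in $L^{p}(\Omega,(\wstar_{p})^{p-1})$. Then $(u_1+u_2)/2$ is also a minimizer and both convexity inequalities become equalities. Strict convexity of $\xi\mapsto|\xi|^{p}$ for $p>1$ forces $\nabla u_1=\nabla u_2$ a.e. on $I_{\Omega,w}$, so $u_1-u_2$ is constant on each connected component $I_{a_i,b_i}$; strict convexity of the $L^{p}$-norm for $1<p<\infty$ forces $u_1-g$ and $u_2-g$ to be nonnegative scalar multiples of each other in $L^{p}(\Omega,(\wstar_{p})^{p-1})$. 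Combining these two rigidity conditions with the fact that $\wstar_{p}\equiv 0$ outside $\overline{I_{\Omega,w}}$ (so that equality in $L^{p}(\Omega,(\wstar_{p})^{p-1})$ is equality a.e.\ on $I_{\Omega,w}$) forces $u_1=u_2$, a contradiction.

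The main obstacle I expect is the identification of the gradient of the weak limit, namely showing that the image of $u\mapsto(u,\nabla u)$ is weakly closed inside the product of reflexive $L^{p}$ spaces; this relies crucially on the local non-degeneracy of $w$ and $\wstar_{p}$ on $I_{\Omega,w}$ supplied by Proposition~\ref{propw}. The uniqueness step is also slightly delicate because the penalty is the $L^{p}$-norm and not its $p$-th power, so strict convexity of the Banach space $L^{p}(\Omega,(\wstar_{p})^{p-1})$ (Clarkson's inequality) must be invoked directly rather than reading uniqueness off a single strictly convex integrand.
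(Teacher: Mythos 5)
Your argument is correct and follows essentially the same route as the paper, which simply invokes the direct method (coercivity and lower semicontinuity of $H$ with respect to the relevant norm) together with convexity for uniqueness; you have supplied in full the details that the paper leaves implicit (reflexivity of $W$ via the closed embedding $u\mapsto(u,\nabla u)$, weak lower semicontinuity of $\overline{F}$ from convexity plus strong lower semicontinuity, and identification of the weak limit). Note only that your careful rigidity argument for uniqueness is genuinely needed: $H$ is not literally strictly convex (the norm penalty is positively homogeneous, and $\overline{F}$ is blind to locally constant shifts), so the paper's appeal to ``strict convexity of $H$'' should be read exactly as the equal-gradient-plus-strict-convexity-of-$L^{p}(\Omega,(\wstar_{p})^{p-1})$ argument you spell out, with uniqueness understood a.e.\ with respect to $(\wstar_{p})^{p-1}\,\de x$.
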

\begin{proof} It is a consequence of the classical direct methods of the Calculus of Variations, since the functional $H(u)$ is coercive and lower semicontinuous with respect to the norm in $L^{p}(I_{\Omega,w},w)$. The uniqueness is due to the strict convexity of $H(u)$.
\end{proof}
Now, let us consider the larger space
\begin{align*}
{\rm{\overline{Dom}}}_{w}\coloneqq \left\{u: \Omega\rightarrow \R: u\in W_{{\rm loc}}^{1,1}(I_{\Omega,w}), \displaystyle\int_{I_{\Omega,w}}\vert \nabla u\vert^{p}w\de x<+\infty\right\}=\bigcap_{i=1}^{N_w}{\rm{\overline{Dom}^i}}_{w},
\end{align*}
where $I_{\Omega,w}$ is defined according to \eqref{Iomega}. We have
$$
{\rm{\overline{Dom}}}_{w}\coloneqq\bigcap_{i=1}^{N_w}{\rm{\overline{Dom}^i}}_{w},
$$
with
\begin{align*}
{\rm{\overline{Dom}^i}}_{w}\coloneqq \left\{u: \Omega\rightarrow \R: u\in W_{{\rm loc}}^{1,1}(I_{a_{i},b_{i}}),\displaystyle\int_{I_{a_{i},b_{i}}}\vert \nabla u\vert^{p}w\de x<+\infty\right\}.
\end{align*}
\begin{teo}\label{thm:minimizer}
Let $u_0$ as in Theorem \ref{exist}. Then function $u_0$ is the minimizer for the following minimum problem
$$
\min_{{\rm{\overline{Dom}}}_{w}} H(u),
$$
i.e. for every "competitor" $z\in {\rm{\overline{Dom}}}_{w}\cap L^{p}(\Omega,(\wstar_{p})^{p-1})$ we have
$$
H(u_0)\leq H(z).
$$
\end{teo}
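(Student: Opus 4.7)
The plan is to reduce the minimization over the larger (non-radial) space to the known minimization on $W$ via spherical symmetrization. Given any competitor $z\in{\rm{\overline{Dom}}}_{w}\cap L^{p}(\Omega,(\wstar_{p})^{p-1})$, I define its radialization by
\[
\tilde z(x):=\fint_{\partial B_{|x|}} z\,\de\HH^{d-1}=\frac{1}{\omega_d|x|^{d-1}}\int_{\partial B_{|x|}} z\,\de\HH^{d-1},
\]
so that $\tilde z(x)=\tilde v(|x|)$ with $\tilde v(r):=\fint_{\partial B_r}z\,\de\HH^{d-1}$. The goal is to show $\tilde z\in W$ and $H(\tilde z)\leq H(z)$; then Theorem~\ref{exist} gives $H(u_0)\leq H(\tilde z)\leq H(z)$, which is the thesis.

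Next I would verify that $\tilde z$ is in $W$. A Fubini/polar-coordinates argument, together with $z\in W^{1,1}_{\rm loc}(I_{\Omega,w})$, shows that on each subinterval $(a_i,b_i)$ the function $\tilde v$ belongs to $W^{1,1}_{\rm loc}((a_i,b_i))$ with
\[
\tilde v'(r)=\fint_{\partial B_r}\bigl(\nabla z(y)\cdot\tfrac{y}{|y|}\bigr)\,\de\HH^{d-1}(y),
\]
so that $\tilde z\in AC_r^d(I_{\Omega,w})$ once $\tilde v$ is checked to be absolutely continuous on compact subintervals via the weighted Hölder estimate used in Corollary~\ref{proppp1}. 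Now Jensen's inequality applied pointwise in $r$ yields $|\tilde v'(r)|^p\leq\fint_{\partial B_r}|\nabla z|^p\,\de\HH^{d-1}$. Multiplying by $\omega_d r^{d-1}\eta(r)$, integrating over $I_{p,\supp(\eta)}$, and using the radiality of $w$ together with the coarea-type identity from the proof of Lemma~\ref{lemma2}, I get
\[
\int_{I_{\Omega,w}}|\nabla\tilde z|^p w\,\de x\leq\int_{I_{\Omega,w}}|\nabla z|^p w\,\de x,
\]
so in particular $\tilde z\in{\rm{Dom}}_{r,w}$.

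For the data-fitting term I exploit that $g$ is radial, so $g(x)=\fint_{\partial B_{|x|}}g\,\de\HH^{d-1}$. Jensen then gives
\[
|\tilde z(x)-g(x)|^p=\Bigl|\fint_{\partial B_{|x|}}(z-g)\,\de\HH^{d-1}\Bigr|^p\leq\fint_{\partial B_{|x|}}|z-g|^p\,\de\HH^{d-1},
\]
and multiplication by the radial weight $(\wstar_p)^{p-1}(x)$ followed by integration in polar coordinates (the factor $\omega_d r^{d-1}$ exactly absorbs the $\fint$) yields
\[
\|\tilde z-g\|^p_{L^p(\Omega,(\wstar_p)^{p-1})}\leq\|z-g\|^p_{L^p(\Omega,(\wstar_p)^{p-1})}.
\]
Since $g\in L^p(\Omega,(\wstar_p)^{p-1})$, this shows $\tilde z\in L^p(\Omega,(\wstar_p)^{p-1})$, hence $\tilde z\in W$. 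Combining the two estimates with the representation of $\overline F$ in Theorem~\ref{mainfinitelocsum} gives $H(\tilde z)\leq H(z)$, and the conclusion follows.

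The main obstacle is the regularity step: justifying rigorously that the spherical average $\tilde v$ lies in $W^{1,1}_{\rm loc}$ of each component $(a_i,b_i)$ with the claimed formula for $\tilde v'$, and that the Jensen-type inequalities integrate cleanly against the (possibly strongly degenerate) radial weights $w$ and $(\wstar_p)^{p-1}$. This is exactly the point where the structure of the auxiliary weight $\wstar_p$ introduced in \eqref{pesop} and the local summability properties recorded in Corollary~\ref{proppp1} are invoked, following the blueprint of \cite{CrastaMalusa} adapted to our degenerate setting.
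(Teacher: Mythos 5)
Your proposal is correct and follows essentially the same route as the paper: the paper reduces to the radial minimizer of Theorem \ref{exist} by invoking the symmetrization results of Crasta--Malusa (Theorem 3.2 and Corollary 3.3 there, applied on each annulus $I_{a_i,b_i}$ with $f(r,s)=\eta(r)|s|^p$), which produce for every competitor $z$ a radial competitor with no larger energy --- precisely the spherical-average-plus-Jensen argument you carry out explicitly for both the gradient term and the radial data-fitting term. The only difference is that you unpack the cited black box rather than quoting it, and you correctly flag the single technical point (the $W^{1,1}_{\rm loc}$ regularity of the spherical average and the formula for its radial derivative) that the reference is being used to handle.
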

\begin{proof} 
By assumption, for every $i=1,\dots,N_w$ 
$$
H(u^i_0)\leq H(z^i_{rad}),
$$
for every ``competitor" $$z^i_{rad}\in {\rm{{Dom}^i}}_{w}\cap L^{p}(I_{a_{i},b_{i}},(\wstar_{p})^{p-1}),$$
where $u^i_0$ is the restriction of $u_0$ to $I_{a_{i},b_{i}}$.
It is sufficient to prove  that, for every $i=1,\dots,N_w$  we have
$$
H(u^i_0)\leq H(z^i),
$$
for every ``competitor" $$z^i\in {\rm{\overline{Dom}^i}}_{w}\cap L^{p}(I_{a_{i},b_{i}},(\wstar_{p})^{p-1}).$$
On the other hand, fixed $i=1,\dots,N_w$, we can repeat the argument of the proofs of \cite{CrastaMalusa}[Theorem 3.2 and Corollary 3.3] with 
$(0,R)=(a_i,b_i)$ and $f(r,s)=\eta(r)|s|^p$. 
As proven in \cite{CrastaMalusa}[Corollary 3.3] the minimization problem for $H(u)$ on ${\rm{\overline{Dom}^i}}_{w}$
is equivalent to the minimization problem for the one–dimensional functional
$$F^i_{\rm{rad}}(u)
:=
\int_{a_i}^{b_i} r^{d-1} |u'(r)|^p\eta(r)\,dr,
$$
in the functional space
$$
{\mathcal{W}}^{i}_{\rm{rad}}:=\left\{u\in AC_{\rm{loc}}((a_i,b_i)):\ u(c_i)=d_i,\  r^{d-1}|u'(r)|^p\in L^1((a_i,b_i),\eta)
\right\},
$$
which plays the role of ${\mathcal{W}}^{1}_{\rm{rad}}$ of \cite{CrastaMalusa}.
By \cite{CrastaMalusa}[Theorem 3.2] for every competitor $z^i\in {\rm{\overline{Dom}^i}}_{w}\cap L^{p}(I_{a_{i},b_{i}},(\wstar_{p})^{p-1})$ (this space plays the role of $W_{{\rm loc}}^{1,1}(I_{a_i,b_i})$), there exists a radial function $z^i_{rad}\in {\rm{{Dom}^i}}_{rad,w}\cap L^{p}(I_{a_{i},b_{i}},(\wstar_{p})^{p-1})$ such that 
$$
H(z^i_{rad})\leq H(z^i).
$$
Then 
$$
H(u^i_0)\leq H(z^i_{rad})\leq H(z^i).
$$
\end{proof}
\begin{oss}
As noticed before in \eqref{ifp>d}, if $p>d$, then $W_{r}^{1,p}(\Omega) \subset {\mathrm AC}_{r}^{d}(\Omega)$. In this case, the space ${\mathrm AC}_{r}^{d}(\Omega)$ is larger than the space of $W_{r}^{1,p}(\Omega)$-functions. However, our minimum problem is set in the class of $W_{{\rm loc}}^{1,1}(\Omega)$-functions whose gradient belongs to $L_{{\rm loc}}^{p}(\Omega,w)$. Therefore, we cannot assume apriori that $u\in L_{{\rm loc}}^{p}(\Omega,w)$, and therefore the minimization cannot be directly carried out within $W_{r}^{1,p}(\Omega)$. Formulating the minimization in $W^{1,p}(\Omega)$ without the radiality constraint, would require an additional assumption on the integrability of $u$, namely $u\in L_{{\rm loc}}^{p}(\Omega,w)$, which does not hold in general in our setting. Accordingly, it is appropriate at this stage to set the minimization problem in the weighted Sobolev space $W^{1,p}(\Omega,\mu)$, although a rigorous treatment of this formulation is beyond of the present work and is left for future investigation.
\end{oss}
\begin{oss}
Let us give a comment about the weighted Sobolev space $W^{1,p}(\Omega,w)$  in order to point out that our approach is essentially different. As shown in \cite{LAMB0}, this space can be constructed for general metric measure spaces. That is, we may replace $(\mathbb{R}^{d}, \norma{\cdot})$ by a generic metric space $(X,{\rm d})$. The authors showed that at least three different approaches can be used: The ${\bf H}$-approach based on the density of Lipschitz functions, the ${\bf W}$-approach based on the integration by part formula, and the ${ \bf BL}$-approach based on the property of functions to be absolutely continuous along curves. In our approach, we have used the ${ {\bf W}}$-approach since we have considered as metric space $X=\mathbb{R}^{d}$, and the metric ${\rm d}$ induced by the usual norm. Let us notice that in the ${\bf H}$-approach the construction is as follows. Let $(X,{\rm d})$ be a metric mesure space where $\mu$ is a boundedly-finite measure.  In what follows, let us denote by ${\rm Lip}(f,X)$ 
\begin{align}\label{lipfnew}
{\rm Lip}(f,X)\coloneqq \sup\left\{\frac{\vert f(x)-f(y)\vert}{{\rm d}(x,y)}: x,y\in X,\, x\neq y\right\},
\end{align}
 We denote by ${\rm Lip}(X)$ the set of such functions for which \eqref{lipfnew} is finite.  Given $f\in {\rm Lip}(X)$ its upper gradient (or slope) is defined as 
\begin{align*}
\vert \nabla f\vert(x)\coloneqq \limsup_{y\rightarrow x}\frac{\vert f(y)-f(x)\vert}{{\rm d}(x,y)}.
\end{align*}
A non-negative function $g\in L^{p}(X,{\rm d},\mu)$ is said to be a relaxed $p$-upper slope of $f$ if there exist $(f_{n})_{n}\subseteq {\rm Lip}(X)$ of boundedly supported functions, and $0\leq g'\leq g$ such that
\begin{align*}
f_{n}\rightarrow f\, \text{in $L^{p}(X,{\rm d},\mu)$},\,\hskip 0,2cm {\rm Lip}_{a}(f_{n})\rightarrow g',\, \text{weakly in $L^{p}(X,{\rm d},\mu)$}
\end{align*}
where 
\begin{align*}
{\rm Lip}_{a}(f_{n})(x)\coloneqq
\begin{cases}
\inf_{r>0}{\rm Lip}(f_{n},B_{r}(x))& \text{if $x$ is an accumulation point,}\\
0& \text{if $x$ is an isolated point.}
\end{cases}
\end{align*}
Here $B_{r}(x)$ denotes a ball of radius $r$ centered at $x\in X$. By following \cite{LAMB0}, we denote by ${\rm RS}(f)$ the set of all possible $p$-upper slope, and 
\begin{align*}
\vert Df\vert\coloneqq \bigwedge \left\{g'\in L^{p}(X,{\rm d},\mu): g'\in {\rm RS}(f)\right\}
\end{align*}
the minimal relaxed $p$-upper slope. Then the weighted Sobolev space $W^{1,p}(X)$ is defined as
\begin{align*}
W^{1,p}(X)\coloneqq \left\{ f\in L^{p}(X,{\rm d},\mu): {\rm RS}(f)\neq \emptyset\right\}.
\end{align*}
Then the norm in  $W^{1,p}(X)$ is given by
\begin{align*}
\norma{f}_{W^{1,p}(X)}^{p}\coloneqq \norma{f}_{L^{p}(X,{\rm d},\mu)}^{p}+\norma{\vert Df\vert}_{L^{p}(X,{\rm d},\mu)}^{p}.
\end{align*}
We point out that in this approach, $\vert Df\vert$ is not longer a vector but a non-negative function. Moreover, by definition, the convergence of  ${\rm Lip}_{a}(f_{n})$ is considered with respect to the measure $\mu$. In contrast, our framework allows for convergence with respect to a different measure (say, $\hat{\mu}(\de x)\coloneqq \hat{w}_{p}(x)\de x$).
\end{oss}

\bigskip

\textsc{Acknowledgments.}
The authors are members of  the Istituto Nazionale di Alta Matematica (INdAM), GNAMPA Gruppo Nazionale per l'Analisi Matematica, la Probabilità e le loro Applicazioni, and are partially supported by the INdAM--GNAMPA 2023 Project \textit{Problemi variazionali degeneri e singolari} and the INdAM--GNAMPA 2024 Project \textit{Pairing e div-curl lemma: estensioni a campi debolmente derivabili e diﬀerenziazione non locale}.\\
This study was carried out within the ``2022SLTHCE - Geometric-Analytic Methods for PDEs and Applications (GAMPA)" project – funded by European Union – Next Generation EU  within the PRIN 2022 program (D.D. 104 - 02/02/2022 Ministero dell’Università e della Ricerca).\\
AMH has been supported by project PRIN 2022 
``understanding the LEarning process of QUantum Neural networks (LeQun)'', proposal code 2022WHZ5XH -- CUP J53D23003890006.
\\
This manuscript reflects only the authors’ views and opinions and the Ministry cannot be considered responsible for them.

\end{document}